\documentclass[a4paper,12pt]{amsart}
%
%
%
%
%
%
\setlength{\textwidth}{418pt}
\setlength{\oddsidemargin}{17.5pt}
\setlength{\evensidemargin}{17.5pt}
\newcommand{\newc}{\newcommand}
\usepackage{amsmath,amsfonts,amsthm,tensor}
\newtheorem{thm}{Theorem}[section]
\newtheorem{defn}[thm]{Definition}
\newtheorem{prop}[thm]{Proposition}

\newtheorem{remk}[thm]{Remark}

\def\sideremark#1{\ifvmode\leavevmode\fi\vadjust{\vbox to0pt{\vss
 \hbox to 0pt{\hskip\hsize\hskip1em
 \vbox{\hsize2.5cm\tiny\raggedright\pretolerance10000
  \noindent #1\hfill}\hss}\vbox to8pt{\vfil}\vss}}}%
                        
                                                   %

%
%
\newc{\ds}{\displaystyle}
%
%
\newc{\nn}[1]{(\ref{#1})}
%
%
\newc{\nd}{\nabla}
\newc{\Sym}{\mbox{Sym}}
\newc{\CurlyD}{\mathfrak{D}}
\newc{\Action}{\mathcal{S}}
%
%
\newc{\GenTen}{T}                        
\newcommand{\cL}{\mathcal{L}}
\newc{\LagDen}{\cL_A}

\newcommand{\End}{\operatorname{End}}

\newc{\g}{\mbox{\textsl{g}}}             
\newc{\hatg}{\hat{\g}}
\newc{\hatgsup}{\hat{\mbox{\textsl{\tiny g}}}}    
\newc{\hatgsub}{\hat{\mbox{\textsl{\tiny g}}}}    
\newc{\gsup}{\mbox{\textsl{\tiny g}}}    
\newc{\gsub}{\mbox{\textsl{\tiny g}}}    
\newc{\bg}{\textbf{\g}}                  
\newc{\Ric}{\mbox{\textit{Ric}}}
\newc{\Sc}{\mbox{\textit{Sc}}}
\newc{\ObTen}{\mathcal{O}_{ab}}           
\newc{\ObTenSix}{\mathcal{O}_{ab}^{6}}    
\newc{\ObTenGenDim}{\mathcal{O}_{ab}^{n}} 
\newc{\ObTenSixac}{\mathcal{O}_{ac}^{6}}
\newc{\ObTenSixcd}{\mathcal{O}_{cd}^{6}}
\newc{\ObTenSixce}{\mathcal{O}_{ce}^{6}}
\newc{\ObTenSixic}{\mathcal{O}_{ic}^{6}}
\newc{\h}{h}                             
\newc{\FGI}{\mbox{\textit{FGI}}}         
\newc{\ScaleDen}{\xi^{\gsup}}             
\newc{\ScaleDenHat}{\xi^{\hatgsup}}
%
%
\newc{\TanB}{T\hspace{-0.2ex}M}          
\newc{\CotB}{T^*\hspace{-0.4ex}M}        
\newc{\GenBund}{V}                       
\newc{\AnyBund}{W}                       
\newc{\adGenBund}{ad\GenBund}            
\newc{\TBund}{\mathcal{T}}               
\newc{\TBundB}{\mathcal{T}^B}            
\newc{\ce}{\mathcal{E}}                  
\newc{\dbun}{\ce}                        
\newc{\MetricBund}{\mathcal{Q}}          
\newc{\TenBundrs}{T^{(r,s)}}               
\newc{\TenBundOneZero}{T^{(1,0)}}          
\newc{\TenBundZeroOne}{T^{(0,1)}}         
\newc{\TenBundZerok}{T^{(0,k)}}            
%
%
\newc{\AngleIProd}{\langle\cdot,\cdot\rangle}
\newc{\BilinearMap}{\langle\cdot,\cdot\rangle}
%
\newc{\Integers}{\mathbb{Z}}             
\newc{\Reals}{\mathbb{R}}                
\newc{\ConnectionSet}{\mathcal{A}}
\newc{\cc}{\mbox{\slshape\bfseries c}}   
%
%
\begin{document}

\title{A conformally invariant Yang-Mills type energy and
  equation on 6-manifolds\vspace{3.0ex}}

\author{A. Rod Gover, Lawrence J.\ Peterson, and Callum Sleigh}

\address{A.R.G.: Department of Mathematics\\
  The University of Auckland\\
  Private Bag 92019\\
  Auckland 1142\\
  New Zealand}
\email{r.gover@auckland.ac.nz}
\address{L.J.P.:  Department of Mathematics\\
  The University of North Dakota\\
  101 Cornell Street Stop 8376\\
  Grand Forks, ND 58202-8376\\
  USA}\vspace{1.0ex}
\email{lawrence.peterson@und.edu\vspace{1.8ex}}
\email{callumsleigh@gmail.com}
\begin{abstract}
We define a conformally invariant action $\Action$ on gauge
connections on a closed pseudo-Riemannian manifold $M$ of
dimension~6. At leading order this is quadratic in the gauge
connection.  The Euler-Lagrange equations of $\Action$, with respect
to variation of the gauge connection, provide a higher-order
conformally invariant analogue of the (source-free) Yang-Mills
equations.

For any gauge connection $A$ on $M$, we define $\Action(A)$ by first
defining a Lagrangian density associated to $A$. This is not
conformally invariant but has a conformal transformation analogous to a
$Q$-curvature. Integrating this density provides the conformally
invariant action.

In the special case that we apply $\Action$ to the conformal
Cartan-tractor connection, the functional gradient recovers the
natural conformal curvature invariant called the Fefferman-Graham
obstruction tensor. So in this case the Euler-Lagrange equations are
exactly the ``obstruction-flat'' condition for 6-manifolds. This
extends known results for 4-dimensional pseudo-Riemannian manifolds
where the Bach tensor is recovered in the Yang-Mills equations of the
Cartan-tractor connection.
\end{abstract}

\thanks{2020 \textit{Mathematics Subject Classification}.  primary:
  53C18, 53C07; secondary: 58E15, 81T13, 53A55.\\ \indent
  A.R.G.\ gratefully acknowledges support from the Royal Society of
  New Zealand via Marsden Grants 16-UOA-051 and
  19-UOA-008. C.S.\ Acknowledges support of an FRDF Postdoctoral
  Fellowship at the University of Auckland.}

\maketitle

\pagestyle{myheadings} \markboth{Gover, Peterson, and Sleigh}{A
  Conformally Invariant Yang-Mills Type Energy}

\section{Introduction}\label{IntroSect}

Central to both the standard model in physics and our understanding of
differentiable 4-manifolds are the extremely well-known gauge field
equations known as the Yang-Mills equations
\cite{Donaldson,DonaldsonKron,tHooft,YM}. A feature of these
equations, that is crucially important in both spheres, is their
conformal invariance in dimension~4 \cite{Donaldson,FP1,FP,JR,Taubes}.  On
the background of a Riemannian or pseudo-Riemannian 4-manifold
$(M,\g)$, consider a gauge connection $A$ with curvature $F_{ab}$
(where the gauge field indices are suppressed and the tensor indices
are abstract).  The Yang-Mills Lagrangian density is (up to a constant
factor we shall ignore)
$\LagDen:=\operatorname{Trace}(\g^{ab}\g^{cd}F_{ac}\circ F_{bd})$.
For any $\Upsilon\in C^{\infty}(M)$, let
$\hatg_{ab}=e^{2\Upsilon}\g_{ab}$.  Under the conformal transformation
$\g_{ab}\mapsto\hatg_{ab}$, the Lagrangian transforms to
$e^{-4\Upsilon}\LagDen$, and this exactly balances the corresponding
transformation of the metric measure $d\mu^{\gsup}\mapsto
d\mu^{\hatgsup}= e^{4\Upsilon}d\mu^{\gsup}$.  The action $\Action$
given by
\begin{equation}\label{act1}
\Action(A):=\int_M \LagDen\,d\mu^{\gsup} ,
\end{equation}
for compactly supported $F$, is thus conformally invariant. It follows
at once that the functional gradient, with respect to variation of
connection, is also conformally invariant.  The Euler-Lagrange
equations that these determine, that is the (source-free) Yang-Mills
equations
\begin{equation}\label{ymeq}
\delta_A F=0,
\end{equation}
are thus conformally invariant as well.  Here $\delta_A $ is the
formal adjoint of the connection-twisted exterior derivative $d_A$.

This argument for conformal invariance evidently fails in dimensions
other than 4, and it is well-known, and easily verified, that the
equations (\ref{ymeq}) are not conformally invariant in other
dimensions \cite{Branson-ymc}.  It is thus natural to ask whether, in
dimensions $n$ other than 4, there is a replacement action/energy and
corresponding natural Euler Lagrange PDE system that is conformally
invariant. That is, on a smooth $n$-manifold we seek a map from pairs
$(\g,A)$, where $\g$ is a pseudo-Riemannian metric of any given
signature $(p,q)$, and $A$ is a principal connection, to corresponding
Lagrangian densities $\LagDen$ of conformal weight $-n$ so that the
expression \nn{act1} is again conformally invariant.  (We will discuss
densities and weights in Section~\ref{ConformalDen} below.)  There is
an important question here of whether the conformally invariant
4-dimensional Yang-Mills equations are an isolated phenomenon, or
whether rather they are the dimension 4 case of more general
picture. From elementary conformal geometry one does not expect such a
possibility in odd dimensions.

These questions are interesting from the perspective of gauge theory
in general, but they also have an interesting and direct motivation in
conformal geometry linked to the Bach tensor and its generalisations.
On pseudo-Riemannian 4-manifolds, the {\em Bach tensor} is an
important conformal invariant given by
\begin{equation}\label{Bach}
B_{ab}:= \nd^cA_{acb}+P^{cd}C_{cadb}.
\end{equation}
Here and below, $A_{acb}$, $P^{cd}$, and $C_{cadb}$ are the Cotton,
Schouten, and Weyl tensors, respectively, and $\nd$ is the Levi-Civita
connection. See Section \ref{ConformalGeom}. In other dimensions $n\geq 3$ we will also say that
$B_{ab}$, as given in \nn{Bach}, is the Bach tensor.  Metrics $\g$
that are conformal to Einstein metrics are Bach-flat (meaning
$B^{\gsup}_{ab}=0$).  So in dimension 4 the equations $B^{\gsup}_{ab}=0$ provide, in a
sense, a conformally invariant weakening of the Einstein equations, an
idea that dates back to the original work of Bach for a ``conformal
gravity'' theory \cite{Bach21} and that is still exploited
\cite{AO,CLW,GV,H,Mald}. Self-dual 4-manifolds are also Bach-flat and
so there is considerable interest in determining to what extent 
 there are other examples on closed Riemannian manifolds
\cite{CGGGR,LeB}.  The tensor \nn{Bach} is not conformally invariant
in dimensions other than 4, but in higher even dimensions there is a
conformally invariant higher-order analogue of \nn{Bach} that takes the form
\[
\ObTen=\Delta^{n/2-2}\nabla^c\nabla^d C_{acbd}+
\mbox{lower-order terms}.
\]
Fefferman and Graham discovered the tensor $\ObTen$ as an obstruction
to their ``ambient metric'' construction.  See \cite{FG2}.  For this
reason, $\ObTen$ is often referred to as the {\em Fefferman-Graham}
obstruction tensor.  In dimension~4, the tensors $\ObTen$ and $B_{ab}$
are equal.  We will often write $\ObTenGenDim$ instead of $\ObTen$ to
emphasise the dimension.

The Bach tensor also arises as the functional gradient (now with
respect to metric variations and where we are ignoring overall
constant factors) of the integral of the square of the conformal Weyl
tensor $|C|^2_{\gsub}=C^{abcd}C_{abcd}$.  (Here we use the metric to
raise and lower indices.)  As in the case of $\LagDen$, the weights
conspire so that $\int_M |C|^2_{\gsub}\,d\mu^{\gsup}$ is conformally
invariant in dimension 4. On closed manifolds, an alternative
Lagrangian density is provided by Branson's much studied $Q$-curvature
$Q^{\gsup}$ of \cite{BrO,BrFunctDet,Brsharp}. This alternative density
generalises, in that in all even dimensions there is such a
$Q$-curvature, and the Fefferman-Graham obstruction tensor is the
functional gradient of
\begin{equation}\label{IntegralQ}
\int_{M} Q^{\gsup}\,d\mu^{\gsup}
\end{equation}
with respect to metric variations \cite{Graham-Hirachi}.  The
situation is slightly more subtle now, however, because the
$Q$-curvature is not itself conformally invariant (whether we treat it
as a function or as a conformal density).  Under conformal
transformation, $Q$ changes by a term in which a linear operator acts
on the logarithm of the conformal factor.  This term is of divergence
form, and for this reason, \nn{IntegralQ} is conformally invariant
\cite{Brsharp}.

On a general Riemannian or pseudo-Riemannian manifold there is no
natural conformally invariant connection on the tangent bundle, but in
dimensions $n\geq 3$ there is a natural conformally invariant
connection on a related bundle of rank $n+2$. This is the conformal
tractor connection \cite{Thomas} due, in its original form, to Cartan
and Thomas \cite{Cart,TYThomas}.  See Section~\ref{TBundSect} below.
A nice feature of this connection is that in dimension 4, its
associated Yang-Mills equations recover exactly the Bach-flat
condition $B_{ab}=0$ \cite{Merk,GoverSombergSoucek}: Lemma 4.2 in
\cite{GoverSombergSoucek} states that in dimension 4, in an informal
tractor notation,
\begin{equation}\label{YM-Bach}
\delta_{\nd^{\TBund}}F_{\nd^{\TBund}} = c\cdot (0,0,0, B_{ab}),
\end{equation}
where $\nd^{\TBund}$ is the conformal tractor connection, and $c$ is
an (explicit non-zero) constant. One might hope that a
higher-dimensional conformally invariant Yang-Mills type theory would
generalise this picture.

In the current article, we treat these questions in dimension
$n=6$. It seems to us that the results should assist with extending
many of the studies and directions mentioned above from the confines
of dimension 4 to higher even dimensions.
For convenience, we will always view the gauge connection $A$ as
a linear connection acting on some vector bundle $\GenBund$, and we
let $F_A$ denote the curvature of $A$.  Our work will rely on the
operator $Q_2^A$ defined in Section~\ref{QTwoSect}.  This operator
belongs to a family of operators defined in
\cite{BransonGoverGen,BransonGover}.  In what follows, we will also
work with a bilinear mapping $\left<\cdot,\cdot\right>$ which we define
in Section~\ref{ConnectionSect}.

For closed (i.e.\ compact without boundary) 6-manifolds, we provide
the sought map from pairs $(\g,A)$, consisting of a metric and a
connection, to Lagrangian densities $\LagDen$ so that the action
$\Action$ given in \nn{act1} is conformally invariant.  The Lagrangian
density we provide, namely $\left< F_{A}, Q_{2}^{A} F_{A} \right>$, is
an analogue of the $Q$-curvature.  Here we mean that this density is
not conformally invariant but rather transforms, under conformal
rescaling of the metric $\g$ on $M$, by a linear divergence-type
operator acting on the logarithm of the conformal factor. This is an
immediate consequence of Proposition~\ref{Q2rescaling} below, which
was first derived in \cite{BransonGover}.  If $M$ is closed, it
follows that
\begin{equation}\label{ourA}
  \Action(A)=\int_M\left<F_{A},Q_{2}^{A}F_{A}\right>\,d\mu^{\gsup}
\end{equation}
  is
invariant under conformal change of the metric on $M$.  See
Proposition~\ref{SACoInv}.

An important feature of the Lagrangian density
$\left<F_A,Q_2^AF_A\right>$ is that it is quadratic in the jets of the
connection $A$ at leading order.  So if we consider the functional
gradient of $\Action$, with respect to variation of connection, we find
that this functional gradient is linear at leading order.  The
functional gradient therefore provides a
conformally invariant map $\CurlyD$ from connections $A$ to 1-forms
taking values in $\End(\GenBund)$:
$$
A\mapsto\CurlyD A.
$$
We thus obtain a higher-order conformally invariant analogue of the
Yang-Mills equations.  See Proposition~\ref{EulerLagrange} and
Definition~\ref{DDef}. By construction these equations are conformally
invariant, but a direct verification is provided in Proposition
\ref{conformalinv}.

As an application and test of this, in Section
\ref{ApplicationsSection} we show that the non-linear operator
$\CurlyD$ applied to the conformal tractor connection exactly recovers
the Fefferman-Graham obstruction tensor of conformal 6-manifolds. Thus
we obtain a perfect parallel of the situation in dimension 4, as
discussed above, and at the same time a new perspective on the
Fefferman-Graham obstruction tensor. See Theorem
\ref{ObstructionTheorem} which, again in an informal tractor notation,
states that
\begin{equation}\label{a-key}
\CurlyD (\nd^{\TBund})= k\cdot(0,0,0, \ObTen ),
\end{equation}
where $\nd^{\TBund}$ is the conformal tractor connection, and $k$
is an (explicit non-zero) constant. Compare this to (\ref{YM-Bach})
above.

It is not difficult to see that there are other conformally invariant
actions on connections in dimension 6. For example, apart from
divergences, we can add cubic terms
$\operatorname{Trace}(\g^{af}\g^{bc}\g^{de}F_{ab}\circ F_{cd}\circ
F_{ef})$ to the Lagrangian density.  However Theorem
\ref{ObstructionTheorem} (i.e.\ \nn{a-key}) suggests that \nn{ourA} is
distinguished.

Finally, in Section \ref{FGIRemark} we discuss the link between our
action \nn{SADef}, when specialised to the case that $A$ is the
conformal tractor connection, and a conformally invariant action we
construct using the Fefferman-Graham invariant of Proposition~3.4 of
\cite{FG2}.

It seems that recently there has been some interest in ``higher order
Yang-Mills'' flows \cite{K,Z}.  In these works, the generation of the
equations emphasises the use of broadly analogous higher-order actions
on Riemannian manifolds, rather than attention to conformal
properties, but it is possible that some of the results could be
applied to the equations we develop here. In \cite{K} Kelleher asks
the question of whether there might be conformally invariant
higher-order Yang-Mills type equations, and suggests that, if so,
conformal invariance might be a good distinguishing property. In
\cite{Baez} the author looks at a notion of Yang-Mills for higher
dimensions that is not closely related to the programme here.

%
%
\section{Notation and background}\label{Notation}

Throughout our work, $M$ will be a smooth manifold of dimension $n$.
Our main interest lies in smooth manifolds of dimension $6$, but in
this section, we work with general manifolds in general dimensions
$n$.  We let $\TanB$ and $\CotB$ denote the tangent and cotangent
bundles, respectively, of $M$.  We will often use Penrose's abstract
index notation.  Lowercase indices $a$, $b$, $c$, and so forth will be
associated to $\TanB$ and $\CotB$, and uppercase indices such as $B$,
$C$, and $D$ will be associated to other finite-dimensional vector
bundles.  Parentheses will indicate symmetrisation of indices.  For
example, if $T_{abc}$ is a rank~3 tensor, then
$T_{a(bc)}=(1/2)(T_{abc}+T_{acb})$.  We will always assume that
$\g_{ab}$ (or just $\g$) denotes a pseudo-Riemannian metric on $M$ and
that $R_{ab}{}^{c}{}_{d}$ is the Riemannian curvature tensor of the
Levi-Civita connection $\nd$ associated to $\g_{ab}$.  We use the sign
convention for $R_{ab}{}^{c}{}_d$ such that
\begin{equation}\label{RiemannConvention}
(\nd_a\nd_b-\nd_b\nd_a)v^c=R_{ab}{}^c{}_d v^d
\end{equation}
for all vector fields $v^c$ on $M$.  Let $k\in\Integers_{>0}$ and a
smooth tensor field $\GenTen^{c_1\ldots c_k}$ on $M$ be given.  Then
\begin{equation}\label{CommuteCovD}
  \nd_a\nd_b\GenTen^{c_1\ldots c_k}
  =
  \nd_b\nd_a\GenTen^{c_1\ldots c_k}
  +R_{ab}{}^{c_1}{}_i\GenTen^{i\ldots c_k}
  +\cdots
  +R_{ab}{}^{c_k}{}_i\GenTen^{c_1\ldots i},
\end{equation}
by \nn{RiemannConvention}.  We may use the metric $\g_{ab}$ to raise
or lower any of the free indices in \nn{CommuteCovD}.  We let
$\Delta :=\nd_a\nd^a$.  For any vector bundle $\AnyBund$ over $M$, let
$\Gamma(\AnyBund)$ denote the space of all smooth sections of
$\AnyBund$.  The notation $\AnyBund^*$ will always denote the dual of $\AnyBund$.
Our scaling convention for the wedge product of differential forms
will be such that for all $p,\,q\in\Integers_{\geq 0}$, all $x\in M$,
and all $p$-forms $\omega$ and $q$-forms $\psi$ at $x$,
\begin{equation}\label{WedgeConvention}
\omega\wedge\psi=\frac{(p+q)!}{p!q!}\,\mbox{Alt}(\omega\otimes\psi).
\end{equation}
%
%
\subsection{Conformal geometry}\label{ConformalGeom}
Let a manifold $M$ and pseudo-Riemannian metrics $\g$ and $\hatg$ on
$M$ be given, and suppose there is an $\Upsilon\in C^{\infty}(M)$ such
that $\hatg=e^{2\Upsilon}\g$.  Then we say that $\hatg$ is
\textit{conformal} to $\g$.  Let $\cc$ denote the set of all
pseudo-Riemannian metrics on $M$ which are conformal to $\g$.  We say
that $\cc$ is a \textit{conformal structure} on $M$ or a
\textit{conformal class} on $M$, and we say that the pair $(M,\cc)$ is
a \textit{conformal manifold}.  Throughout our work, $\hatg$ will
always denote the metric $e^{2\Upsilon}\g$ (for some $\g$ and
$\Upsilon$).  For any object, such as a tensor, that depends on the
choice of the metric in $\cc$, a hat\ \ $\widehat{}$\ \ will indicate
the object as determined by the metric $\hatg$.

We will need to introduce several classical tensors that occur
naturally in conformal geometry due to their simple transformation
laws under conformal rescaling.  The first tensor we will need is the
symmetric tensor $P_{ab}\in \Gamma (S^{2}\CotB)$, called the
\emph{Schouten tensor}, defined in general dimensions $n\geq 3$ by
\[
P_{ab}:= \frac{1}{n-2} \left(\Ric_{ab} -
\frac{1}{2(n-1)}\,\g_{ab}\,\Sc\right).
\]
Here $\Ric_{ab}=R^c{}_{acb}$ is the Ricci tensor, and
$\Sc=\Ric_{a}{}^a$ is the scalar curvature. The tensors $P$, $R$,
$\Ric$, and $\Sc$ depend on the choice of metric $\g$, and we may
write $P^{\gsup}$, $R^{\gsup}$, and so forth to indicate this dependence.
We will usually omit $\g$ from our notation, however.

By using the metric to raise indices, we may associate $P_{ab}$ to an
element of $\Gamma(\End(\CotB))$.  We will let the symbol $P \#$ denote
the corresponding derivation on general tensor fields. For example,
the action of $P \#$ on a two-form $\omega_{ab}$ is
\[
(P \#\omega)_{ab} = \tensor{P}{_a^c} \tensor{\omega}{_c_b} +
\tensor{P}{_b^{c}} \tensor{\omega}{_a_c}.
\]
We will use this later.  For all $r,s\in\Integers_{\geq 0}$, let
$\TenBundrs$ denote the bundle of tensors of contravariant rank
$r$ and covariant rank $s$.
For any vector bundle $\AnyBund$, we
may extend the action of $P\#$ (trivially) to sections of $\TenBundrs
\otimes\AnyBund$ as follows.  First, let
\[
P\#(\omega\otimes U) =
(P\#\omega)\otimes U
\]
for all $\omega\in\Gamma(\TenBundrs)$ and all
$U\in\Gamma(\AnyBund)$.  Then extend the action $P\#$ linearly to sums
of sections of the form $\omega\otimes U$, where
$\omega\in\Gamma(\TenBundrs)$ and $U\in\Gamma(\AnyBund)$.

Our work will also involve the scalar field $J$ given by
$J=P_a{}^a=P_{ab}\g^{ab}$.  From the second Bianchi identity, it
follows that
\begin{equation}\label{TraceNbP}
\nd_aP^{a}{}_b=\nd_bJ.
\end{equation}
The \textit{Cotton tensor} $A_{abc}$ is defined by
\begin{equation}\label{CottonTensor}
A_{abc}:=\nd_bP_{ca}-\nd_cP_{ba}.
\end{equation}
The Cotton tensor is trace-free.  The Weyl tensor is given
by
\begin{equation}\label{WeylTensor}
C_{abcd}:=R_{abcd}+\g_{cb}P_{ad}-\g_{ca}P_{bd}+\g_{da}P_{bc}-\g_{db}P_{ac}.
\end{equation}
In all dimensions $n\geq 3$, the Bach tensor $B_{ab}$ is given by
\nn{Bach}.  We note here that $B_{ab}$ is trace-free.  A short
 computation shows that $B_{ab}$ is symmetric.
%
%
\subsection{Conformal density bundles}\label{ConformalDen}
One may simplify many of the calculations and fomulae of 
conformal geometry by using the language of weighted density bundles.
By its definition, a conformal structure $\cc$ determines a ray
sub-bundle $\MetricBund$ of $S^{2}\CotB$. This sub-bundle is actually
a principal $\mathbb{R}^{+}$-bundle $\pi: \MetricBund\rightarrow M$.
For every $\g\in\cc$ and every $x\in M$, the fibre of $\MetricBund$
over $x$ consists of all metrics at $x$ which are conformal to $\g$ at
$x$.  For each $w\in \mathbb{R}$ we will denote by $\dbun[w]$ the
induced line bundle arising from the representation of
$\mathbb{R}^{+}$ on $\mathbb{R}$ given by $t \mapsto t^{-w/2}$. We
note that each bundle $\dbun[w]$ is canonically oriented.  Sections of
$\dbun[w]$ are called \emph{conformal densities of weight $w$} and
maybe identified (via the associated bundle construction) with
homogeneous functions of degree $w$ on $\MetricBund$.

There is a tautological function $\bg$ on $\MetricBund$ taking values
in $S^{2}\CotB$ which simply assigns to a point
$(\g_{x},x) \in\MetricBund$
the metric $\g_{x}$ at $x$. This determines a section
$\bg$ (or $\bg_{ab}$) of $S^{2}\CotB\otimes\dbun[2]$ called the
\emph{conformal metric}.  For any nonvanishing $\sigma \in
\Gamma(\dbun[1])$, it follows $\sigma^{-2}\bg_{ab}$ is a metric in the
conformal class, and we use the term \emph{conformal scale} for
$\sigma$.  In a similar fashion, we may define a section $\bg^{ab}$ of
$S^2\TanB\otimes\ce[-2]$ in such a way that $\bg^{ab}=\bg^{-1}$.  A
given pseudo-Riemannian metric $\g$ determines a positive section
$\ScaleDen$ of $\dbun[1]$ via the relation
$\g_{ab}=(\ScaleDen)^{-2}\bg_{ab}$.  We say that $\ScaleDen$ is the
\textit{scale density} associated to $\g$.  For any metric $\g$, the
Levi-Civita connection acts on sections of $\dbun[w]$ as follows.  Let
$f\in C^{\infty}(M)$ be given.  Then
$\nd((\ScaleDen)^wf)=(\ScaleDen)^wdf$, where $d$ is the exterior
derivative on functions.  Note that $\nd\ScaleDen=0$.

Note that $\bg_{ab}$ and $\bg^{ab}$ are conformally invariant, so we
may use $\bg^{ab}$ and $\bg_{ab}$ to raise and lower tensor indices in
a conformally invariant way.  This fits with the notion of weighted
tensors.  From this point forward, all tensors will implicitly be the
product of a tensor and a density of some weight $w$.  We say that the
tensor is \textit{weighted}, or that it \textit{carries} a weight.  The
Riemannian curvature tensor $R_{ab}{}^c{}_d$ will carry a weight of 0.
On the other hand, $\bg_{ab}$ will carry a weight of 2, and $\bg^{ab}$
will carry a weight of $-2$.  If we raise or lower an index, we will
always use the conformal metric to do this.  Thus $R_{abcd}$ will have
weight $2$, and $R_{ab}{}^{cd}$ will carry a weight of $-2$.  The
tensors $P_{ab}$, $J$, $A_{abc}$, $C_{abcd}$, and $B_{ab}$ will carry
weights $0$, $-2$, $0$, $2$, and $-2$, respectively.  Elements and
sections of vector bundles will also carry weights.

To work with weighted tensors and vectors efficiently, we introduce
additional notations.  For every $k\in\Integers_{\geq 0}$, let
$\Lambda^k(M)$ denote the bundle of $k$-forms on $M$, and for every
$w\in\Reals$, let $\Lambda^k(M,w)$ denote
$\Lambda^k(M)\otimes\dbun[w]$.  Let $\Omega^k(M,w)$ denote
$\Gamma(\Lambda^k(M,w))$.  Similarly, for any vector bundle $\AnyBund$
over $M$, let $\Lambda^k(\AnyBund)$ denote the bundle of $k$-forms on
$M$ with values in $\AnyBund$, and let $\Lambda^k(\AnyBund,w)$ denote
$\Lambda^k(\AnyBund)\otimes\dbun[w]$.  Let $\Omega^k(\AnyBund,w)$
denote $\Gamma(\Lambda^k(\AnyBund,w))$.  Note that
$\Lambda^k(M)=\Lambda^k(M,0)$.  Similar remarks apply to
$\Lambda^k(\AnyBund)$, $\Omega^k(M)$, and $\Omega^k(\AnyBund)$.

Remembering that tensors carry weights, we now state the
transformation rules for $P_{ab}$ and $J$ under conformal change of
metric.  In general dimensions $n$, the Schouten tensor transforms
according to the rule
\[
\widehat{P}_{ab} = P_{ab} - \nabla_{a} \Upsilon_{b} + \Upsilon_{a}
\Upsilon_{b} -\frac{1}{2}\bg_{ab} \Upsilon_{c} \Upsilon^{c}.
\]
Here $\nabla$ is the Levi-Civita connection associated to the original
metric $\g$, and (for example) $\Upsilon_a=\nd_a\Upsilon$.  Also,
$\Upsilon^c=\bg^{cd}\Upsilon_d$.  We will use these notational
conventions throughout our work.  Note that $\Upsilon^c$ carries a
weight of $-2$.  In general dimensions $n$, the conformal transformation
rule for $J$ is as follows:
\[
  \widehat{J}
  =
  J
  -\nd_a\Upsilon^a
  +\left(1-\frac{n}{2}\right)\Upsilon_a\Upsilon^a.
\]
%

%
%
\subsection{Integration of densities}\label{IntegrationSection}
We now consider integration of densities.  Let a conformal manifold
$(M,\cc)$, a section $\phi$ of $\ce[-n]$, which is of compact support,
and $\g\in\cc$ be given.  Then $\phi=(\ScaleDen)^{-n} f$ for some
$f\in C^{\infty}(M)$.
Let $d\mu^{\gsup}$ denote the pseudo-Riemannian measure
on $M$ associated to $\g$, and define $\int_M\phi\,d\mu$ by
\begin{equation}\label{IntDefnC}
\int_M\phi\,d\mu:=\int_Mf\,d\mu^{\gsup}.
\end{equation}
This integral is independent of the choice of the $\g\in\cc$ that we
use in its definition.  To see this, begin by recalling our convention
that $\hatg=e^{2\Upsilon}\g$.
One can show that
$\ScaleDen=e^{\Upsilon}\ScaleDenHat$.
Thus $(\ScaleDen)^{-n}=e^{-n\Upsilon}(\ScaleDenHat)^{-n}$, and hence
with $f$ as above,
\[
\phi=(\ScaleDen)^{-n}f=e^{-n\Upsilon}(\ScaleDenHat)^{-n}f=
(\ScaleDenHat)^{-n}e^{-n\Upsilon}f.
\]
So if we
use $\hatg$ to define $\int_M\phi\,d\mu$, we find that
\[
 \int_M\phi\,d\mu
=\int_Me^{-n\Upsilon}f\,d\mu^{\hatgsup}
=\int_Me^{-n\Upsilon}fe^{n\Upsilon}\,d\mu^{\gsup}
=\int_Mf\,d\mu^{\gsup}.
\]
This is consistent with \nn{IntDefnC}.

%
%
\section{Bundles and connections}\label{ConnectionSect}

In our discussions of connections and curvature on general vector
bundles, our conventions and basic definitions will essentially follow
those given in standard references such as \cite{DonaldsonKron}.  In
this section, we describe our conventions and define some of the
vector bundles and operators that our work will use.
We also state some conformal transformation rules for connections and
other operators.  For consistency with work in other contexts, we work
in general dimensions $n$ unless we indicate otherwise.

Let a finite-dimensional vector bundle $\AnyBund$ over $M$, a
connection $A$ on $\AnyBund$, and a nonnegative integer $k$ be given.
Let $\nd$ denote the standard coupling of $A$ and the Levi-Civita
connection.  When $\nd$ acts on weighted tensors associated to $\TanB$
or $\CotB$, it acts as the Levi-Civita connection.  If $\nd$ acts on a
section of $\AnyBund$, then it acts as $A$.  By linearity and the
Leibniz property, $\nd$ is extended to weighted sections of tensor
products of these bundles.  The symbol $\nd$ will denote a coupled
connection throughout much of our work.

Let $w\in\Reals$ be given.  The connection $A$ induces a twisted
exterior derivative
$$d_{A}:
\Omega^{k} (\AnyBund,w) \rightarrow \Omega^{k+1}(\AnyBund,w).$$
By \nn{WedgeConvention},
\begin{equation}\label{dBundle}
(d_{A}\omega)_{a_1\ldots a_{k+1}}{}^{B}
=
\frac{1}{k!}\sum_{\sigma\in S_{k+1}}(\mbox{sgn\ }\sigma)
\nd_{a_{\sigma(1)}}\omega_{a_{\sigma(2)}\ldots a_{\sigma(k+1)}}{}^{B}
\end{equation}
for all $\omega\in\Omega^{k}(\AnyBund,w)$.
Here $S_{k+1}$ denotes the set of all permutations of $\{1,\ldots,k+1\}$.

To give a formula for the formal adjoint of $d_{A}$, we begin by
letting a metric $h$ (of some signature) on $\AnyBund$ be given, and
we suppose that $h$ is preserved by the connection $A$.  Also let
$k\in\Integers_{\geq 0}$ and $w_1$, $w_2\in\Reals$ be given, and
suppose that $w_1+w_2=2k-n$.  We will use the conformal metric $\bg$
on $M$ together with $h$ to define a bilinear mapping
\[
\BilinearMap:
\Lambda^k(\AnyBund,w_1)\times\Lambda^k(\AnyBund,w_2)\rightarrow
\dbun[-n].
\]
Specifically, for all $x\in M$ and all
$\omega\in\Lambda^k(\AnyBund,w_1)$ and all
$\eta\in\Lambda^k(\AnyBund,w_2)$ at $x$, we let
\begin{equation}\label{BilinearForm}
\langle\omega,\eta\rangle=\frac{1}{k!}\,\omega_{a_1\ldots a_k}{}^B
\eta^{a_1\ldots a_k}{}_B.
\end{equation}
Now again let $k\in\Integers_{\geq 0}$ and  $w_1,w_2\in\Reals$ be
given, but now suppose that $w_1+w_2=2(k+1)-n$.  The formal adjoint of
$d_{A}:\Omega^k(\AnyBund,w_1)\rightarrow\Omega^{k+1}(\AnyBund,w_1)$
with respect to $\BilinearMap$ is easily computed to be the operator
$
\delta_{A}:\Omega^{k+1}(\AnyBund,w_2)\rightarrow\Omega^{k}(\AnyBund,w_2-2)
$
given by
\begin{equation}\label{deltaBundle}
(\delta_{A}\eta)_{a_1\ldots
    a_k}{}^B=-\nd_b\eta^b{}_{a_1\ldots a_k}{}^B.
\end{equation}

Once again let $k\in\Integers_{\geq 0}$ be given.  For any
$w\in\Reals$, an obvious adaptation of the above discussion defines
our conventions for the exterior derivative
$d:\Omega^k(M,w)\rightarrow\Omega^{k+1}(M,w)$ and its formal adjoint
$\delta$.  By removing the subscript $A$ and the index $B$ from
\nn{dBundle} and \nn{deltaBundle}, we obtain symbolic formulae for
$(d\omega)_{a_1\ldots a_{k+1}}$ and $(\delta\eta)_{a_1\ldots a_k}$.

We will always assume that the connection $A$ is independent of any
choice of metric from the conformal class $\cc$ on $M$.  The coupled
connection $\nd$ will transform under conformal change of the metric
$\g$, however, and we will need certain conformal transformation rules
involving $\nd$.  Specifically, let $w\in\Reals$, a one-form $\eta$,
and a two-form $\omega$ be given.  Suppose that $\eta$ and $\omega$
have weight $w$ and that both take values in $\Reals$ or $\AnyBund$.
Then
\[
  \widehat{\nd}_a\eta_b=\nd_a\eta_b+(w-1)\Upsilon_a\eta_b-\Upsilon_b\eta_a
  +\bg_{ab}\Upsilon^{c}\eta_c\,,
\]
and
\begin{equation}\label{TwoFormTran}
\widehat{\nabla}_{a} \omega_{bc} = \nabla_{a} \omega_{bc} + (w-2)
\Upsilon_{a} \omega_{bc} - \Upsilon_{b} \omega_{ac} - \Upsilon_{c}
\omega_{ba} + \bg_{ab}\Upsilon^{d} \omega_{d c}+
\bg_{ac}\Upsilon^{d}\omega_{bd}.
\end{equation}
Both of these transformation rules follow from elementary computations
and the fact that $A$ is conformally invariant.  From \nn{deltaBundle}
and \nn{TwoFormTran}, it follows that
\begin{equation}\label{deltahatomega}
(\widehat{\delta}_A\omega)_c=(\delta_A\omega)_c+(4-n-w)\Upsilon^a\omega_{ac}.
\end{equation}

Our work will focus on vector bundles $\adGenBund$ which we will
define as follows.  Let $\pi: P \rightarrow M$ be an arbitrary
$G$-bundle, for some Lie group $G$. Any finite-dimensional $G$-module
$\mathbb{\GenBund}$ gives rise to an associated vector bundle
$\GenBund$ over $M$. Let $A$ denote a $G$-connection on the bundle
$P$.  The same notation will be used to denote the corresponding
associated linear connection on the associated vector bundle
$\GenBund$.  The adjoint representation induces a bundle of
Lie-algebras over $M$, and the derivative of the $G$-representation
acting on $\mathbb{\GenBund}$ then gives a vector bundle $\adGenBund$,
which is a sub-bundle of $\End(\GenBund) \cong \GenBund \otimes
\GenBund^{*}$.  Let $x\in M$ and elements $\omega$ and $\eta$ of
$\adGenBund$ at $x$ be given.  We will let $\omega\eta$
denote the composition $\omega\circ\eta$.  We may write $\omega\eta$
 in index notation as $\omega^B{}_D\eta^D{}_C$.  We will use this
 notation when we define a metric on $\adGenBund$ below.  Our work will
 \textit{not} require the existence of a metric on $\GenBund$.

We will work with operators acting on $\Omega^k(\adGenBund,w)$ for
 various $k\in\Integers_{\geq 0}$ and $w\in\Reals$.  To define these
 operators, let $k\in\Integers_{\geq 0}$ be given.  Let $\nd$ denote
 the connection
 $A$, and recall that $\nd$ determines a dual connection $\nd^*$ on
 $\GenBund^*$.  Specifically, define $\nd^*$ by the formula
 $(\nd^*_Xu)(v)=X(u(v))-u(\nd_Xv)$.  Here $X\in\Gamma(TM)$,
 $u\in\Gamma(\GenBund^*)$, and $v\in\Gamma(\GenBund)$.

By coupling $\nd$ and $\nd^*$, we determine a connection on
$\adGenBund$, and we in turn couple this connection with the
Levi-Civita connection on $M$ to obtain a connection on
$\Omega^{k}(\adGenBund,w)$.  Then \nn{dBundle} defines an exterior
derivative
$d_{A}:\Omega^{k}(\adGenBund,w)\rightarrow\Omega^{k+1}(\adGenBund,w)$
associated to $A$.  When we work with this $d_{A}$, we will append a
lower index $C$ to each side of \nn{dBundle}.

To give a formula for the formal adjoint of $d_A$, we will need a
metric on $\adGenBund$.  For any $x\in M$ and any $\omega$ and $\eta$
in $\adGenBund$ at $x$, let $h(\omega,\eta)=\omega^B{}_C\eta^C{}_B$.
Then by construction, $h$ is preserved by the connection $A$. For the
general theory, and for the purposes of deriving equations, we will
assume that $G$ is such that $h$ is non-degenerate, so $h$ is a bundle
metric on $\adGenBund$. (For example this is always the case if $G$ is
semisimple.  Then $h$ is a multiple of the Killing form.)  Now let
$k\in\Integers_{\geq 0}$ be given.  Let $\AnyBund=\adGenBund$, and
let $\langle\cdot,\cdot\rangle$ be the bilinear mapping described in
\nn{BilinearForm} and in the discussion preceding \nn{BilinearForm}.
In this context, we may rewrite \nn{BilinearForm} as
\[
\langle\omega,\eta\rangle=\frac{1}{k!}\,\omega_{a_1\ldots
  a_k}{}^B{}_C\,\eta^{a_1\ldots a_kC}{}_{B}.
\]
and \nn{deltaBundle} as
\begin{equation}\label{AdjointadV}
(\delta_A\eta)_{a_1\ldots
    a_k}{}^B{}_C=-\nd_b\eta^b{}_{a_1\ldots a_k}{}^B{}_C.
\end{equation}

\begin{remk}\label{sig}
Note that since the $h$ is defined by the canonical self-duality of
$\End (V)$, it is non-degenerate but with mixed signature. Indeed if, for example,
we use any choice of positive definite metric to identify $V$ with
$V^*$, and hence to identify $\End (V)$ with $\otimes^2 V$, then it is
easily seen that $h$ is positive definite on the symmetric part
$S^2V\subset \otimes^2 V$, and negative definite on the skew part
$\Lambda^2V\subset \otimes^2 V $. Of course the signature of $h$ is
independent of the choice made.

Although for the general theory we assume assume that $G$ is
such that $h$  is non-degenerate on $\adGenBund$, the equations that we
derive can be used more widely.
\end{remk}

We will sometimes consider the induced connections or twisted
exterior derivatives on $\GenBund$ and on $\adGenBund$ at the same
time, and we will use the same notations (such as $d_{A}$) for the versions of these
operators on both $\GenBund$ and $\adGenBund$.

The space of all connections on $\GenBund$ will be denoted
$\mathcal{A}$.  For any $A_1,\ A_2\in\mathcal{A}$, one can easily show
that $A_1-A_2\in\Omega^1(\adGenBund)$.  So once a connection
$A_{0}\in\mathcal{A}$ has been chosen, any other connection
$A\in\mathcal{A}$ can be written as $A_{0} + a$, for some $a \in
\Omega^{1}(\adGenBund)$.  Thus $\mathcal{A}$ is an affine space
modeled on the vector space
$\Omega^{1} (\adGenBund)$.  Suppose we choose a local frame for
$\GenBund$ and let $A_{0}=d$ be the trivial connection for this frame
(the exterior derivative on components).  Then for any
$A\in\mathcal{A}$, there is an $a\in\Omega^1(\adGenBund)$ such that $A
= d + a$.  This is the usual local expression for a linear connection.

Now let $A$, $A_0\in\mathcal{A}$ be given.  Then $A=A_0+a$ for some
$a\in\Omega^1(\adGenBund)$, as we noted above.  Next, let
$k\in\Integers_{\geq 0}$ be given,
and note that $A$ and $A_0$ induce connections on
$\TenBundZerok\otimes\End(\GenBund)$.  Let $A$ and $A_0$,
respectively, denote these induced connections.  Then for any
$\phi_{j_1\ldots
  j_k}{}^B{}_C\in\Gamma(\TenBundZerok\otimes\End(\GenBund))$,
elementary computations show that
\begin{equation}\label{ANaughtPlusa}
  A_i\phi_{j_1\ldots j_k}{}^B{}_C=
  (A_0)_i\phi_{j_1\ldots j_k}{}^B{}_C
  +a_i{}^B{}_E\phi_{j_1\ldots j_k}{}^E{}_C
  -\phi_{j_1\ldots j_k}{}^B{}_Ea_{i}{}^E{}_C.
\end{equation}
When we regard $A$ and $A_0$ as connections on
$\TenBundZerok\otimes\End(\GenBund)$, we will thus say that
$A=A_0+[a,\cdot]$.  In this context, we will also let $A_0+a$ denote
$A_0+[a,\cdot]$.  We do this, for example, in \nn{L9July21A}.
%
%
\subsection{Curvature}\label{CurvatureSect}
There is a (gauge) invariant section $F_{A}\in\Omega^{2}(\adGenBund)$ such that
$d_{A}^{2}S^B = F_{A}S^B$ for all $S^B\in \Gamma(\GenBund)$.  Thus
\begin{equation}\label{FCurveID}
(\nd_a\nd_b-\nd_b\nd_a)S^B = (F_A)_{ab}{}^B{}_ES^E
\end{equation}
for all $S^B\in\Gamma(\GenBund)$.  Here $\nd$ is again the coupled
connection. Because the Levi-Civita connection is torsion free,
$F_{A}$ is the usual \emph{curvature} of the connection $A$.
Similarly, there is an $F_A^*\in\Omega^2(\adGenBund^*)$ such that
$(\nd_a\nd_b-\nd_b\nd_a)S_B = (F_A^*)_{ab}{}^C{}_BS_C$ for all
$S_B\in\Gamma(\GenBund^*)$.  One can use the duality of $\GenBund$ and
$\GenBund^*$ to show that
$(F_A^*)_{ab}{}^C{}_B=-(F_A)_{ab}{}^{C}{}_B$.  Thus
\begin{equation}\label{FCurveID2}
(\nd_a\nd_b-\nd_b\nd_a)S_B = -(F_A)_{ab}{}^E{}_BS_E
\end{equation}
for all $S_B\in\Gamma(V^*)$.  The curvature $F_A$ satisfies the
Bianchi identity $d_AF_A=0$, as noted in \cite{DonaldsonKron}.
%
%
\subsection{The coupled $Q_{2}^A$-operator}\label{QTwoSect}
As we noted in Section~\ref{IntroSect}, the 
$Q$-curvature  is defined in \cite{BrO,BrFunctDet,Brsharp}.  In
\cite{BransonGoverGen,BransonGover}, the first author and Branson
define and discuss a family of linear operators $Q_{k}$, for $n$ even
and $k=0,1,2,\ldots,n/2+1$, mapping $\Omega^{k}(M)$ to
$\Omega^k(M,2k-n)$.  The $Q_k$-operators generalise Branson's
$Q$-curvature in several ways.  Branson's original $Q$-curvature is
equal to $Q_{0}1$.  We now restrict to manifolds of dimension $n=6$.
The case of interest to us then is the case $k=2$, where the operator
$Q_{2}$ is given as follows:
\begin{equation}\label{QTwoFormula}
Q_{2} \omega := d \delta \omega - 4 P \# \omega + 2 J \omega.
\end{equation}

In \cite{BransonGover}, the above-mentioned authors also discuss a
generalised family of operators $Q_k$ which act on bundle-valued
$k$-forms. We need only the $k=2$ case here, and we construct such
generalised operators by replacing the exterior derivative and its
adjoint in \nn{QTwoFormula} with the twisted exterior derivative
$d_{A}$ and its adjoint $\delta_{A}$, respectively.  We do this in the
next definition.  In this definition and in much of what follows, we
suppress bundle indices.  We will do so without further comment, since
the context should make this clear.
%
%
\begin{defn}\label{QTwoDef}
Let a pseudo-Riemannian manifold $(M,g)$ of dimension $n=6$, a vector bundle $\AnyBund$ over
$M$, and a bundle metric $h$ on $\AnyBund$
 be given.  Also let $A$ be a conformally invariant connection
 on $\AnyBund$ that preserves $h$.  Let the operator
$Q_{2}^A:\Omega^2(\AnyBund)\rightarrow\Omega^2(\AnyBund,-2)$ be
defined as follows:
\[
Q^A_{2} \omega= d_{A} \delta_{A} \omega - 4 P \# \omega+ 2 J \omega.  
\]
\end{defn}

The following proposition describes the behaviour of $Q_2^A$ under
conformal rescaling.
\begin{prop}\label{Q2rescaling}
Let $n$, $\AnyBund$, $A$, and $\omega$ be as in
Definition~\ref{QTwoDef}.  Suppose also that $\omega$ is $d_A$-closed.
Then under a conformal rescaling of the metric $\g$ on $M$, the
operator $Q_{2}^A$ obeys the following transformation rule:
\begin{equation}\label{QTwoTransformLaw}
  \widehat{Q}^A_{2} \omega
  =
  Q^A_{2}\omega + 2 \delta_A d_A ( \Upsilon \omega).
\end{equation}
\end{prop}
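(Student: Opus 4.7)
The plan is a direct index-level computation: evaluate each of the three summands in $Q_2^A\omega$ with respect to the rescaled metric $\hatg$, collect all correction terms, and show that their sum equals $2\delta_A d_A(\Upsilon\omega)$, invoking $d_A\omega=0$ at the final step.

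First I unpack the target. Since $d_A\omega=0$, the Leibniz rule for the twisted exterior derivative gives $d_A(\Upsilon\omega)=d\Upsilon\wedge\omega$, so the right-hand side of \nn{QTwoTransformLaw} reduces to $2\delta_A(d\Upsilon\wedge\omega)$. The wedge convention \nn{WedgeConvention} yields $(d\Upsilon\wedge\omega)_{abc}=\Upsilon_a\omega_{bc}+\Upsilon_b\omega_{ca}+\Upsilon_c\omega_{ab}$, and applying \nn{deltaBundle} produces a 2-form whose entries involve $(\Delta\Upsilon)\omega$, $\Upsilon^a\nabla_a\omega$, $\nabla\Upsilon\cdot\omega$, and $\Upsilon\cdot\delta_A\omega$ contributions. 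This is the ``target'' expression that I must hit.

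For the left-hand side I handle the three pieces separately. For $\hat d_A\hat\delta_A\omega$, I first use \nn{deltahatomega} with $n=6$ and $\omega$ of weight $0$ to obtain $(\hat\delta_A\omega)_c=(\delta_A\omega)_c-2\Upsilon^a\omega_{ac}$, and then apply $\hat d_A$. Antisymmetrizing the transformation rule for $\hat\nabla$ on a weighted 1-form (stated just before \nn{TwoFormTran}) gives $(\hat d_A\eta)_{ab}=(d_A\eta)_{ab}+w(\Upsilon_a\eta_b-\Upsilon_b\eta_a)$ for a weight-$w$ 1-form $\eta$; since $\hat\delta_A\omega$ has weight $-2$, this produces $d_A\delta_A\omega$ together with extra terms arising from Leibniz-expanding $\hat d_A(\Upsilon^a\omega_{a(\cdot)})$, and these involve $\nabla\nabla\Upsilon\cdot\omega$, $\Upsilon\otimes\Upsilon$, and $\Upsilon\cdot\nabla\omega$. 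For $-4\hat P\#\omega$ I substitute the rescaling rule for $P_{ab}$ recorded in Section \ref{ConformalDen} into the definition of $P\#$ on a 2-form. For $2\hat J\omega$ I substitute $\hat J=J-\nabla_a\Upsilon^a-2\Upsilon_a\Upsilon^a$, the $n=6$ specialization of the transformation of $J$.

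Summing the three pieces, the leading terms reassemble into $Q_2^A\omega$ while the remainder is a sum of $\Upsilon$-dependent corrections. The main obstacle is purely bookkeeping: quadratic contributions of type $\Upsilon_a\Upsilon^b\omega_{bc}$ and $\Upsilon_d\Upsilon^d\omega_{ab}$ come from all three sources with a priori different coefficients, and the same is true for the $\nabla_a\Upsilon^b\omega_{bc}$ and $\Upsilon^a(\delta_A\omega)_{(\cdot)}$-type terms; they must be paired and cancelled term by term. The closedness hypothesis enters at the end in the equivalent cyclic form $\nabla_a\omega_{bc}+\nabla_b\omega_{ca}+\nabla_c\omega_{ab}=0$, allowing me to rewrite the residual $\Upsilon^a\nabla_a\omega_{bc}$ contributions as the precise combination appearing in $\delta_A(d\Upsilon\wedge\omega)$, thereby matching the target and completing the proof.
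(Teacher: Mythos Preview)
Your proposal is correct and is precisely the direct verification that the paper alludes to: the paper does not actually prove Proposition~\ref{Q2rescaling} but merely states that it is ``straightforward to verify'' and notes that it is a special case of Theorem~5.3 of \cite{BransonGover}. Your outline---computing $\widehat{d_A\delta_A}\omega$, $\widehat{P\#}\omega$, and $\widehat{J}\omega$ from the transformation rules in Section~\ref{ConformalDen} and \nn{deltahatomega}, then collecting the $\Upsilon$-corrections and invoking $d_A\omega=0$ in its cyclic form---is exactly that verification carried out in full.
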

\noindent Proposition~\ref{Q2rescaling} is straightforward to verify,
and is a special case of Theorem~5.3 of \cite{BransonGover}.  Readers
familiar with the Branson's original $Q$-curvature will recognise the
similarity between \nn{QTwoTransformLaw} and the corresponding
transformation rule for the original $Q$-curvature.  We will apply
$Q^A_2$ to the curvature of a connection in Section~\ref{ActionSect},
below.  We will apply Proposition~\ref{Q2rescaling} in
sections~\ref{ActionSect} and \ref{ComplexSect}. Finally in this
Section, we present here a result that we will need in those Sections.
\begin{prop}\label{QFSA}
The operator $Q_2^A$ is formally self-adjoint.
\end{prop}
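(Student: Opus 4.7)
The plan is to decompose $Q_2^A$ into its three constituent pieces from Definition~\ref{QTwoDef} and verify that each is separately formally self-adjoint with respect to $\langle\cdot,\cdot\rangle$; self-adjointness of $Q_2^A$ then follows by linearity. Throughout, I would take $\omega$ and $\eta$ to be sections of $\Omega^2(\AnyBund,w_1)$ and $\Omega^2(\AnyBund,w_2)$ with weights satisfying $w_1+w_2=0$, which in dimension $n=6$ is precisely the requirement so that both $\langle\omega,Q_2^A\eta\rangle$ and $\langle Q_2^A\omega,\eta\rangle$ lie in $\dbun[-6]$ and hence may be integrated as in \nn{IntDefnC}.

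For the leading term $d_A\delta_A$, I would invoke the defining property of $\delta_A$ from \nn{deltaBundle} as the formal $\langle\cdot,\cdot\rangle$-adjoint of $d_A$ (valid because $A$ preserves $h$). The pointwise pairing is symmetric in its two arguments: the tensor part is a total contraction, and the bundle-metric $h(\omega,\eta)=\omega^B{}_C\eta^C{}_B$ is symmetric by cyclicity of trace. Applying the adjoint relation twice then gives
\[
\int_M\langle d_A\delta_A\omega,\eta\rangle\,d\mu
=\int_M\langle\delta_A\omega,\delta_A\eta\rangle\,d\mu
=\int_M\langle\omega,d_A\delta_A\eta\rangle\,d\mu.
\]

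For the remaining two terms, the self-adjointness is pointwise. The action of $P\#$ on a bundle-valued 2-form is
\[
(P\#\omega)_{ab}{}^B{}_C = P_a{}^c\omega_{cb}{}^B{}_C + P_b{}^c\omega_{ac}{}^B{}_C,
\]
and when this is contracted into $\eta^{ab\,C}{}_B$, the skew-symmetry of $\omega$ and $\eta$ in their form indices makes the two summands equal, collapsing $\langle P\#\omega,\eta\rangle$ to a single expression in which $P_{ab}$ is symmetrically contracted with the pair $(\omega,\eta)$. The same manipulation applied to $\langle\omega,P\#\eta\rangle$ produces the same expression after relabelling dummy indices and using the symmetry of $P_{ab}$. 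Finally, multiplication by the scalar $J$ commutes trivially with the pairing, so the term $2J\omega$ is manifestly self-adjoint.

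Combining the three contributions with the coefficients from Definition~\ref{QTwoDef} gives formal self-adjointness of $Q_2^A$. This is really a bookkeeping exercise, so there is no substantive obstacle; the only point requiring attention is that the weight balance $w_1+w_2=0$ is exactly what places each use of the $(d_A,\delta_A)$ adjoint identity of \nn{deltaBundle} in the range for which it was established.
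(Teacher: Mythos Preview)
Your proof is correct and follows essentially the same route as the paper's own proof: both decompose $Q_2^A$ into its three summands and observe that $d_A\delta_A$ is formally self-adjoint by the very definition of $\delta_A$, that multiplication by $J$ is trivially so, and that the self-adjointness of $P\#$ is a pointwise index identity using only the symmetry of $P_{ab}$. The paper is simply terser, writing out only the $P\#$ index calculation explicitly and regarding the other two pieces as immediate from Definition~\ref{QTwoDef}.
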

\begin{proof}
For any $\eta_{ab},\,\omega_{ab}\in\Omega^2(\AnyBund)$,
\[
P_a{}^{c}\eta^{ab}\omega_{cb}+P_b{}^c\eta^{ab}\omega_{ac}
=
P_c{}^a\eta^{cb}\omega_{ab}+P_c{}^b\eta^{ac}\omega_{ab}
=
P_a{}^c\eta_{cb}\omega^{ab}+P_b{}^c\eta_{ac}\omega^{ab},
\]
so the result follows from Definition~\ref{QTwoDef}.
\end{proof}
%
%
\section{The conformally invariant action in six
  dimensions}\label{ActionSect}
Let $M$ be a closed 6-manifold equipped with a conformal class $\cc$
of pseudo-Riemannian metrics and let notation be as in
Section~\ref{ConnectionSect} above.  So $A$ denotes a connection on a
vector bundle $\GenBund$.  Note that
\begin{equation}\label{LagD}
  \left< F_{A}, Q_{2}^{A} F_{A}
  \right>\in\Gamma(\ce[-6]),
\end{equation}
and define the integration of densities as
in Section~\ref{IntegrationSection}. This allows us to define an
action on $A$ as follows.
\begin{defn}\label{BigSADef}
Suppose that $n=6$ and that $M$ is closed.  Define an action
$\Action(A)$ by
\begin{equation}\label{SADef}
\Action(A) := \int_{M} \left< F_{A}, Q_{2}^{A} F_{A} \right> d \mu.
\end{equation}
\end{defn}
\begin{prop}\label{SACoInv}
The action $\Action(A)$ is invariant under conformal
change of the metric $\g$ on $M$.
\end{prop}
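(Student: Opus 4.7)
My plan is to apply the conformal transformation law for $Q_2^A$ (Proposition~\ref{Q2rescaling}) together with the Bianchi identity and one integration by parts, after first isolating the only piece of \nn{SADef} that is sensitive to the choice of metric in $\cc$.

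First I would point out that three of the ingredients of $\Action(A)$ are already manifestly conformally invariant: (i) by hypothesis $A$ is a conformally invariant connection, so the curvature $F_A \in \Omega^2(\adGenBund,0)$ is unchanged when we pass from $\g$ to $\hatg = e^{2\Upsilon}\g$; (ii) the pairing $\langle\cdot,\cdot\rangle$ of \nn{BilinearForm} is built from the conformal metric $\bg$ and the connection-preserved fibre metric $h$, hence depends only on the conformal class $\cc$; and (iii) integration of weight $-6$ densities against $d\mu$ is conformally invariant, as recorded in Section~\ref{IntegrationSection}. Consequently
\[
\widehat{\Action}(A) - \Action(A) = \int_M \langle F_A,\, (\widehat{Q}_2^A - Q_2^A) F_A \rangle\, d\mu.
\]

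Next I would invoke Proposition~\ref{Q2rescaling} with $\omega = F_A$. The $d_A$-closedness hypothesis is precisely the Bianchi identity $d_A F_A = 0$ recalled in Section~\ref{CurvatureSect}, so the proposition yields
\[
(\widehat{Q}_2^A - Q_2^A) F_A = 2\,\delta_A d_A(\Upsilon F_A),
\]
and the conformal variation reduces to $2\int_M \langle F_A,\, \delta_A d_A(\Upsilon F_A)\rangle\, d\mu$.

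Finally, because $M$ is closed and $\delta_A$ is defined in \nn{AdjointadV} as the formal adjoint of $d_A$ with respect to $\langle\cdot,\cdot\rangle$ (the weights line up: $F_A$ and $d_A(\Upsilon F_A)$ both carry weight $0$, and a weight-$0$ $2$-form paired with a weight-$0$ $3$-form via \nn{BilinearForm} lands in $\ce[-6]$), integrating by parts transfers $\delta_A$ onto the other slot as $d_A$ and gives
\[
\widehat{\Action}(A) - \Action(A) = 2\int_M \langle d_A F_A,\, d_A(\Upsilon F_A)\rangle\, d\mu = 0
\]
by Bianchi once more. I do not anticipate a genuine obstacle here: the argument is a direct application of Proposition~\ref{Q2rescaling} sandwiched between two uses of $d_A F_A = 0$. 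The only bookkeeping to watch is weight compatibility in the adjoint pairing and the fact that closedness of $M$ eliminates any boundary contribution when moving $\delta_A$ across $\langle\cdot,\cdot\rangle$.
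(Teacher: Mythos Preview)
Your proof is correct and follows essentially the same route as the paper's own argument: apply the Bianchi identity $d_A F_A = 0$ to invoke Proposition~\ref{Q2rescaling}, then move $\delta_A$ across the pairing by integration by parts on the closed manifold and use Bianchi again. Your additional remarks on the conformal invariance of $F_A$, of $\langle\cdot,\cdot\rangle$, and of the integration of weight $-6$ densities, together with the weight bookkeeping, make the argument slightly more explicit than the paper's version but do not alter its substance.
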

\begin{proof}
Note that $d_AF_A=0$, by the Bianchi identity.  Thus
\[
\int_M\langle F_A,\widehat{Q}_2^AF_A\rangle\,d\mu=
\int_M \langle F_{A} ,Q_2^AF_A+ 2 \delta_{A} d_{A}(\Upsilon
F_{A})\rangle\,d\mu
\]
by Proposition~\ref{Q2rescaling}.  But $M$ has empty boundary and $A$
preserves the pairing.  Thus
\[
\int_M
\langle F_{A} ,2\delta_Ad_{A}(\Upsilon F_{A})\rangle
\,d\mu
=
\int_M
\langle d_{A} F_{A},2 d_{A} (\Upsilon F_{A}) \rangle\,
d\mu
=
0,
\]
and the proposition follows.
\end{proof}
%
%
\subsection{Euler-Lagrange equations for
  $\Action$}\label{ELSection}
We will now derive the Euler-Lagrange equations for the conformally
invariant action $\Action$.  Here and below, $[\delta_AF_A,F_A]$ will
denote $(\delta_AF_A)F_A-F_A\delta_A F_A$.  This notation includes an
implicit contraction of the free lowercase index of $\delta_A F_A$
with the first lowercase index of the other $F_A$ within the brackets.
\begin{prop}\label{EulerLagrange}
The Euler-Lagrange equations for the action $\Action$ are
\[
\delta_{A}Q^{A}_2 F_{A} - [ \delta_{A} F_{A}, F_{A} ]=0.
\]
\end{prop}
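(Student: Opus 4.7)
The natural approach is a direct variational calculation: take $A_t = A + ta$ for $a \in \Omega^1(\adGenBund)$, compute $\frac{d}{dt}\big|_{t=0}\Action(A_t)$, and read off the Euler-Lagrange equation from the condition that this derivative vanishes for all $a$.

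Three distinct variations contribute to $\langle F_A, Q_2^A F_A\rangle$: the two explicit occurrences of $F_A$, and the operator $Q_2^A$ itself. For the $F_A$-variations one uses the standard identity $\dot F_A := \frac{d}{dt}\big|_{t=0} F_{A+ta} = d_A a$. By the formal self-adjointness established in Proposition~\ref{QFSA}, these two contributions combine, and integration by parts (valid because $M$ is closed and $A$ preserves the pairing) gives
\[
2 \int_M \langle d_A a, Q_2^A F_A\rangle\, d\mu = 2\int_M \langle a, \delta_A Q_2^A F_A\rangle\, d\mu.
\]

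For the operator variation, note that $P\#$ and $J$ in Definition~\ref{QTwoDef} do not involve $A$, so only the derivative of $d_A \delta_A$ contributes, yielding $\dot Q_2^A F_A = \dot d_A(\delta_A F_A) + d_A(\dot\delta_A F_A)$. Starting from \nn{ANaughtPlusa}, I would derive the operator-level identities $(\dot d_A \eta)_{ab} = [a_a,\eta_b]-[a_b,\eta_a]$ on $\adGenBund$-valued $1$-forms and $(\dot \delta_A \omega)_c = -[a^b, \omega_{bc}]$ on $\adGenBund$-valued $2$-forms. Pairing with $F_A$ and moving the outer $d_A$ in the second summand onto $F_A$ by adjointness converts it into $\int_M\langle \delta_A F_A, \dot \delta_A F_A\rangle\, d\mu$. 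Both resulting contributions are cubic in $a$, $\delta_A F_A$, and $F_A$, linked through commutators in $\adGenBund$.

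The remaining task is to repackage these cubic expressions in the form $\int_M \langle a, [\delta_A F_A, F_A]\rangle\, d\mu$. This uses the cyclic property of the trace defining $h$ on $\adGenBund$, together with the antisymmetry $F_{ab} = -F_{ba}$, to match the contraction convention specified for the bracket $[\delta_A F_A, F_A]$. A careful sign check shows that the two contributions (from $\dot d_A \delta_A F_A$ and from $d_A \dot \delta_A F_A$) each equal $-\int_M \langle a, [\delta_A F_A, F_A]\rangle\, d\mu$, so they combine to $-2\int_M \langle a, [\delta_A F_A, F_A]\rangle\, d\mu$. Assembling all the pieces yields
\[
\tfrac{d}{dt}\Big|_{t=0}\Action(A_t) = 2\int_M \big\langle a,\; \delta_A Q_2^A F_A - [\delta_A F_A, F_A]\big\rangle\, d\mu,
\]
and since this must vanish for all test sections $a$, the Euler-Lagrange equation follows. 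I expect the main obstacle to be the sign bookkeeping in the cubic commutator terms, where the specific contraction convention defining $[\delta_A F_A, F_A]$ interacts non-trivially with the antisymmetry of $F_A$ and with the cyclic reshuffling needed to bring $a$ to the outside of the pairing.
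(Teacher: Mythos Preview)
Your proposal is correct and follows essentially the same approach as the paper's proof: linearise $F_{A+ta}$ to get $d_A a$, combine the two $F_A$-variations via the formal self-adjointness of $Q_2^A$, and handle the operator variation $\dot Q_2^A = \dot d_A\,\delta_A + d_A\,\dot\delta_A$ via integration by parts and the trace-cyclicity identity that converts $\langle [a,F_A],\delta_A F_A\rangle$ into $\langle a,[\delta_A F_A,F_A]\rangle$. The only cosmetic difference is that the paper observes up front that both operator-variation summands integrate to $\int_M\langle \dot\delta_A F_A,\delta_A F_A\rangle\,d\mu$ (using that $\dot d_A$ and $\dot\delta_A$ are formal adjoints, since they arise from varying a metric-preserving connection), thereby collapsing your two separate commutator checks into a single one.
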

\begin{proof}
For $a \in \Omega^{1} (\adGenBund)$ and $t \in \mathbb{R}$, let $A+ t
a$ be a path of connections on $\GenBund$ starting at $A$. Then, since
$(F_{A + ta})_{ij}{}^B{}_C = (F_{A})_{ij}{}^B{}_C + t(d_{A}
a)_{ij}{}^B{}_C
+ t^{2}(a\wedge a)_{ij}{}^B{}_C$, it follows that
\[
\left.\frac{d}{dt}
  \Action(A + t a)\,
\right|_{t=0}
=
\int_{M}
\left(
  \left< d_{A} a, Q^{A}_2 F_{A} \right>
  + \left< F_{A}, Q^{A}_2 d_{A} a \right>
  + \langle F_{A} , \dot{Q}^{A}_2 F_{A} \rangle
\right)d\mu.
\]
Here $\dot{Q}_{2}^{A}$ denotes the linearisation of $Q_2^{A}$.  Thus
by Proposition~\ref{QFSA},
\begin{equation}\label{actionTwo}
\left.\frac{d}{dt}
  \Action(A + t a)\,
\right|_{t=0}
=
\int_M\left(
2 \langle d_{A} a , Q_2^{A} F_{A} \rangle + \langle F_{A},
\dot{Q}_2^{A} F_{A} \rangle\right)d\mu.
\end{equation}
Consider the integral of $\langle F_A,\dot{Q}_2^{A}F_A\rangle$.  Only
the leading term of $Q_2^{A}$ will contribute to this integral.  We
find that
\begin{equation}\label{L9July21A}
\frac{d}{d t}\left.\int_M\langle F_{A}, d_{A+ta} \delta_{A+ta}
F_{A} \rangle\,d\mu\,\right|_{t=0}
=
\int_M2 \langle
\dot{\delta}_{A} F_{A}, \delta_{A} F_{A} \rangle\,d\mu.
\end{equation}
By using \nn{AdjointadV} and \nn{ANaughtPlusa}, one can easily show
that
$
\delta_{A + t a} F_{A} = \delta_{A} F_{A} - t [a, F_{A}]
$.
Here the bracket includes an implicit contraction between the
lowercase index of $a$ and the first lowercase index of $F_{A}$.  Thus
$\dot{\delta}_{A} F_{A} = - [ a, F_{A} ]$.  From this it follows that
\begin{equation}\label{EL-BBB}
\int_M\langle
F_A,\dot{Q}_2^A F_A
\rangle\,d\mu
=
-\int_M
2  \langle [ a, F_{A} ], \delta_{A} F_{A}\rangle\,
d\mu.
\end{equation}
Now again let $\nd$ denote the coupling of $A$ and the Levi-Civita
connection, and note that
\begin{equation}\label{EL-AAA}
\begin{array}{ll}
\lefteqn{-2\langle[a,F_A],\delta_AF_A\rangle=
  -2(a_c{}^G{}_HF^c{}_d{}^H{}_I
  -F_{cd}{}^G{}_Ha^{cH}{}_I)
  (-\nd_eF^{edI}{}_G)=
}\vspace{0.8ex}
\\
&
2(a_c{}^G{}_HF^c{}_d{}^H{}_I\nd_e F^{edI}{}_G
-F_{cd}{}^G{}_H a^{cH}{}_I\nd_eF^{edI}{}_G)=
\vspace{0.8ex}
\\
&
2(a_c{}^G{}_HF_d{}^{cH}{}_I(-1)\nd_eF^{edI}{}_G
-a^{cH}{}_IF_{dc}{}^G{}_H(-1)\nd_eF^{edI}{}_G)=
\vspace{0.8ex}
\\
&
-2(-a_c{}^G{}_HF_d{}^{cH}{}_I(\delta_A F)^{dI}{}_G
+a^{cH}{}_I(\delta_A F)^{dI}{}_GF_{dc}{}^G{}_H)
=
-2\langle a,[\delta_A F_A,F_A]\rangle .
\end{array}
\end{equation}
In most of the expressions in \nn{EL-AAA}, we have omitted the
subscript $A$ from $F$, but we have included the indices of $F_A$.
From \nn{actionTwo}, \nn{EL-BBB}, and \nn{EL-AAA}, it follows that
\[
\begin{array}{ll}
\lefteqn{\frac{d}{dt} S(A+ta)\,\bigg|_{t=0}
=
\int_M\left(
2\langle d_{A} a, Q^{A}_2 F_{A} \rangle - 2  \langle a , [\delta_{A}
  F_{A}, F_{A}] \rangle
\right)
d\mu=}\vspace{1.5ex}
\\
&
\ds
\int_M 2
\left\langle a,
  \left(\delta_A Q^{A}_2 F_{A}-[\delta_{A} F_{A}, F_{A}]\right)
\right\rangle
d\mu,
\hspace{24ex}
\end{array}
\]
and the proposition follows.
\end{proof}
%
%
\section{A higher-order analogue of the Yang-Mills equations}\label{ComplexSect}

We continue the setting and notation of the previous section.
\subsection{Definition of $\mathfrak{D}$}\label{DDefSect}
The results in the previous section motivate the following definition:
\begin{defn}\label{DDef}
Define an operator $\mathfrak{D}: \mathcal{A} \rightarrow \Omega^{1}
(\End(\GenBund))$ as follows:
\[
\CurlyD A:= \delta_{A}Q_2^{A} F_{A} - [\delta_{A} F_{A}, F_{A} ].
\]
\end{defn}
\noindent
By comparing Proposition~\ref{EulerLagrange} to the usual derivation
of the Yang-Mills equations as the Euler-Lagrange equations for the
functional $||F_{A} ||^{2}$, one can view the equation $\CurlyD A=0$
as a conformally invariant analogue, for dimension 6, of the usual source-free
Yang-Mills equations in four dimensions. 

%
%
\subsection{Conformal invariance of $\CurlyD$}\label{DInvarianceSect}
The fact that the action $\Action(A)$ is conformally invariant
suggests that the Euler-Lagrange equations $\mathfrak{D}A=0$ are also
conformally invariant. This is verified explicitly in the following
proposition:
\begin{prop}\label{conformalinv}
On psuedo-Riemannian 6-manifolds, the operator $\CurlyD$ is invariant
under conformal change of the metric $\g$ on $M$.
\end{prop}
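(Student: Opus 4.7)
The plan is a direct computation showing $\widehat{\CurlyD}A - \CurlyD A = 0$, using the conformal transformation rule for $\delta_A$ from equation~\nn{deltahatomega} and for $Q_2^A$ from Proposition~\ref{Q2rescaling}, together with the Bianchi identity $d_A F_A = 0$. Since the connection $A$ is assumed independent of the metric in $\cc$, the curvature $F_A \in \Omega^2(\adGenBund)$ is conformally invariant, so the entire discrepancy $\widehat{\CurlyD}A - \CurlyD A$ is carried by $\delta_A$ and $Q_2^A$. While conformal invariance is already predicted on general grounds by Propositions~\ref{SACoInv} and~\ref{EulerLagrange}, the aim here is to give the promised direct verification.

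I would handle the two summands of $\CurlyD A$ separately. Applying \nn{deltahatomega} with $w=0$ and $n=6$ to $F_A$ gives $\widehat{\delta}_A F_A = \delta_A F_A - 2\Upsilon^a (F_A)_{ac}$, hence $[\widehat{\delta}_A F_A, F_A] = [\delta_A F_A, F_A] - 2[\Upsilon^a (F_A)_{ac}, F_A]$. By Bianchi, Proposition~\ref{Q2rescaling} gives $\widehat{Q}_2^A F_A = Q_2^A F_A + 2\delta_A d_A(\Upsilon F_A)$. The essential numerical coincidence at this point is that $Q_2^A F_A$ and $2\delta_A d_A(\Upsilon F_A)$ are both $2$-forms of conformal weight $-2$, so the correction factor $4-n-w$ in \nn{deltahatomega} vanishes identically and $\widehat{\delta}_A$ coincides with $\delta_A$ on these. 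Combining,
\[
\widehat{\CurlyD}A - \CurlyD A = 2\delta_A^2 d_A(\Upsilon F_A) + 2[\Upsilon^a (F_A)_{ac}, F_A].
\]

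Using Bianchi once more, $d_A(\Upsilon F_A) = d\Upsilon \wedge F_A$, so the proof reduces to the pointwise identity
\[
\delta_A^2(d\Upsilon \wedge F_A) + [\Upsilon^a (F_A)_{ac}, F_A] = 0.
\]
I would verify this by expanding $(\delta_A^2 \beta)_c = \nabla^b \nabla^a \beta_{abc}$ with $\beta_{abc} = \Upsilon_a (F_A)_{bc} + \Upsilon_b (F_A)_{ca} + \Upsilon_c (F_A)_{ab}$. Since $\beta$ is antisymmetric in the pair $a,b$, one has $\nabla^b\nabla^a \beta_{abc} = \tfrac{1}{2}[\nabla^b,\nabla^a]\beta_{abc}$, and the commutator of coupled covariant derivatives produces Riemannian curvature terms acting on the lowercase indices of $\beta$ together with a gauge-curvature term in which $F_A$ acts by bracket on the $\adGenBund$-entries.

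The main obstacle is the index bookkeeping in this final step: the Riemannian contributions must cancel among themselves via the algebraic Bianchi identity and the antisymmetry of $F_A$ in its two form-indices, while the gauge-curvature contributions, once the antisymmetry of $F_A$ in the contracted indices is fully exploited, must reassemble precisely to $-[\Upsilon^a (F_A)_{ac}, F_A]$. Apart from this tensor manipulation, no further conceptual input is needed beyond \nn{deltahatomega}, Proposition~\ref{Q2rescaling}, the Bianchi identity $d_AF_A=0$, and the standard formula for the commutator of coupled covariant derivatives acting on bundle-valued tensors.
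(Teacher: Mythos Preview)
Your proposal is correct and follows essentially the same route as the paper's proof: both use \nn{deltahatomega} and Proposition~\ref{Q2rescaling} to reduce $\widehat{\CurlyD}A-\CurlyD A$ to the identity $\delta_A^2 d_A(\Upsilon F_A)+[\iota(d\Upsilon)F_A,F_A]=0$, then verify this by writing $\delta_A^2$ as a curvature commutator and checking that the Riemannian contributions cancel while the gauge-curvature contributions assemble into the bracket term. The only cosmetic difference is that the paper expands $(\delta_A^2 d_A(\Upsilon F))_c$ directly as $2\nabla^{[e}\nabla^{d]}(\Upsilon_d F_{ec})+\nabla^{[e}\nabla^{d]}(\Upsilon_c F_{de})$ rather than first rewriting $d_A(\Upsilon F_A)=d\Upsilon\wedge F_A$.
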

\begin{proof}
  In this proof, we often omit indices.  Note first that $F_A$ and
  $Q_2^AF_A$ carry weights $0$ and $-2$, respectively.  Thus
\[
\widehat{\delta}_A \widehat{Q}_2^A F_A
=
\widehat{\delta}_A (Q_2^AF_A + 2 \delta_A d_A (\Upsilon F_A))
= \delta_AQ_2^AF_A + 2 \delta_A^{2} d_A( \Upsilon F_A),
\]
by \nn{deltahatomega} and \nn{QTwoTransformLaw} since $n=6$.
Similarly,
\begin{equation}\label{L16June20}
  -[ \widehat{\delta}_A
    F_A, F_A]
  =
  -[ \delta_A F_A, F_A] + 2 [\iota(d \Upsilon) F_A, F_A],
\end{equation}
where $\iota(d\Upsilon)$ denotes interior multiplication by
$\Upsilon^a$.  In the second term on the right-hand side of
\nn{L16June20}, the bracket notation includes a contraction of the
free lowercase index of $\iota(d\Upsilon)F_A$ with the first lowercase
index of the second $F_A$ within the brackets.  To complete the proof,
it suffices to show that
\begin{equation}\label{SufficesEquation}
\delta^{2}_A d_A (\Upsilon F_A)
+
[\iota(d\Upsilon) F_A, F_A]=0.
\end{equation}
To do this, we begin by recalling that $d_AF_A=0$.  From this it
follows that
\begin{equation}\label{lastterm}
(\delta^2_Ad_A(\Upsilon F))_c
=
2\nabla^{[e} \nabla^{d]} ( \Upsilon_{d} F_{ec})
+\nabla^{[e} \nabla^{d]} (\Upsilon_{c} F_{de}). 
\end{equation}
Here and for the rest of this proof, we omit the subscript $A$ from
$F$.  Again, $\nd$ denotes the coupling of $A$ and the Levi-Civita
connection.  By \nn{RiemannConvention}, \nn{FCurveID}, and
\nn{FCurveID2}, the right-hand side of \nn{lastterm} is equal to
\begin{equation}\label{Equation14June20}
\begin{array}{l}
  R_{ed}{}^d{}_j\Upsilon^jF^{e}{}_c{}^G{}_H
  +
  R_{ed}{}^e{}_j\Upsilon^dF^j{}_c{}^G{}_H
  -R_{ed}{}^j{}_c\Upsilon^dF^e{}_j{}^G{}_H
  \vspace{0.5ex}
  \\
  -\frac{1}{2}R_{ed}{}^j{}_c\Upsilon_jF^{deG}{}_H
  +\frac{1}{2}R_{ed}{}^d{}_j\Upsilon_cF^{jeG}{}_H
  +\frac{1}{2}R_{ed}{}^e{}_j\Upsilon_cF^{djG}{}_H
  \vspace{0.5ex}
  \\
  +F_{ed}{}^G{}_I\Upsilon^dF^{e}{}_c{}^I{}_H
  -F_{ed}{}^I{}_H\Upsilon^dF^e{}_c{}^G{}_I
  +\frac{1}{2}F_{ed}{}^G{}_I\Upsilon_cF^{deI}{}_H
  -\frac{1}{2}F_{ed}{}^I{}_H\Upsilon_cF^{deG}{}_I.
\end{array}
\end{equation}
Here we have included the indices associated to $\GenBund$ and
$\GenBund^*$.  The sum of the first six terms of \nn{Equation14June20}
is zero, and the sum of the last two terms of \nn{Equation14June20} is
also zero.  Thus
\[
(\delta_A^2d_A(\Upsilon F))_c{}^G{}_H
=
-(\iota(d\Upsilon)F)_e{}^G{}_IF^e{}_c{}^I{}_H
+F^e{}_c{}^G{}_I\,(\iota(d\Upsilon)F)_e{}^I{}_H
=
-[\iota(d\Upsilon)F,F]_c{}^G{}_H.
\]
This establishes \nn{SufficesEquation}.
\end{proof}
%

%
%
\section{Applications}\label{ApplicationsSection}
In this section we specialise to the case that $A$ is the conformal
tractor connection of \cite{Thomas}. As usual we work on a closed
conformal 6-manifold $(M,\cc )$. In this case, in terms of a metric
$\g\in \cc$, our Lagrangian density \nn{LagD} is
\begin{equation}\label{conformal}
\left< F_{A}, Q_{2}^{A} F_{A} \right>
=
4A_{abc}\nd^bP^{ac}
-JC_{abcd}C^{abcd}
+4C_{abcd}\nd^d\nd^bP^{ac}
+4P_{ab}C^a{}_{cde}C^{bcde}.
\end{equation}
If we work modulo divergences, we may simplify the right-hand side of
\nn{conformal} further.  (By \textit{divergences}, we mean terms of
the form $\nd_iT^i$, where $T^i\in\Gamma(TM\otimes \mathcal{E}[-6])$.)  We find that
$$
\Action{(A)}
=
\int_M(
8A_{abc}\nd^cP^{ab}
-JC_{abcd}C^{abcd}
+4P_{ab}C^a{}_{cde}C^{bcde}
)\,d\mu .
$$
For this action we will compute the Euler-Lagrange equations of
Proposition \ref{EulerLagrange}.  The main result is Theorem
\ref{ObstructionTheorem} below which shows that these equations
recover the condition of $\ObTenSix =0$, where $ \ObTenSix$ is the
Fefferman-Graham obstruction tensor of \cite{FG1,FG2} in dimension
$6$. Thus Proposition \ref{EulerLagrange}, and the operator of
Definition~\ref{DDef} so defined, provide a new perspective and way of
constructing a symbolic formula for the Fefferman-Graham obstruction
tensor $\ObTenSix$.  This symbolic formula will express $\ObTenSix$ in
terms of the Levi-Civita connection and tensors associated to the
Riemannian curvature tensor.

%
%
\subsection{The standard tractor bundle}\label{TBundSect}
We begin by reviewing some basic facts concerning the standard tractor
bundle and the conformal tractor connection.  We are interested in
pseudo-Riemannian metrics in dimension $n=6$, but our discussion of
tractors in this subsection treats pseudo-Riemannian metrics in
general dimensions $n$.  We refer the reader to \cite{Thomas} and
\cite{GP-CMP} for further information.

Let a manifold $M$ of dimension $n\geq 3$ and a conformal class $\cc$
of pseudo-Riemannian metrics on $M$ be given.  Let $(p,q)$ denote the
signature of the metrics in $\cc$.  The conformal class $\cc$
determines a certain $(n+2)$-dimensional vector bundle over $M$
commonly known as the (standard) \textit{tractor bundle}.  We let
$\TBund$ (or $\TBundB$ in abstract index notation) denote this bundle.
The conformal class also determines a connection on $\TBund$ which we
call the \textit{tractor connection}.  Let $\nd^{\TBund}$ or simply
$\nd$ denote this connection.  The symbol $\nd$ will also denote the
coupled Levi-Civita tractor connection.  This connection acts on
powers of $\TBund$ in the obvious way.

A choice of metric $\g\in\cc$ determines a vector
bundle isomorphism
\[
I_{\gsub}:
\dbun[1]
\oplus(\TenBundZeroOne\otimes\dbun[1])
\oplus\dbun[-1]\rightarrow\TBundB.
\]
The chosen metric $\g$ also determines algebraic splitting operators
\[
Y^B\in\Gamma(\TBundB\otimes\dbun[-1]),
\hspace{1ex}
Z^{Bc}\in\Gamma(\TBundB\otimes\TenBundOneZero\otimes\dbun[-1]),
\hspace{1ex}
X^B\in\Gamma(\TBundB\otimes\dbun[1])
\]
such that for all
$(\sigma,\mu_c,\rho)\in\dbun[1]\oplus(\TenBundZeroOne\otimes\dbun[1])
\otimes\dbun[-1]$,
\[
I_{\gsub}(\sigma,\mu_c,\rho)=\sigma Y^B+\mu_cZ^{Bc}+\rho X^B.
\]
The splitting operator $X^B$ is conformally invariant.  When the
coupled Levi-Civita tractor connection acts on the three splitting
operators, it obeys the following rules:
\begin{equation}\label{connids}
\nd_aY^B=P_{ac}Z^{Bc},\hspace{3ex}
\nd_aZ^B{}_c=-P_{ac}X^B-\bg_{ac}Y^B,\hspace{3ex}
\nd_aX^B=Z^B{}_a.
\end{equation}

The conformal structure $\cc$ also determines a metric $\h$ on
$\TBundB$ which has signature $(p+1,q+1)$.  We say that $\h$ is the
\textit{tractor metric}.  The tractor connection $\nd$ preserves $\h$.
We may use $\h$ to raise and lower tractor indices, and this operation
commutes with the action of $\nd$.  One can show that $Y_BX^B=1$ and
$Z_{Ba}Z^{B}{}_c=\bg_{ac}$.  All other contractions of tractor indices
involving pairs of the splitting operators $Y^B$, $Z^{Bc}$, and
$X^{B}$ are zero.

Let $\Omega_{ab}{}^D{}_E$ denote the curvature of $\nd^{\TBund}$.
Thus $(\nd_a\nd_b-\nd_b\nd_a)V^D=\Omega_{ab}{}^D{}_EV^E$ for all
$V^D\in\TBund$.  We say that $\Omega_{ab}{}^D{}_E$ is the
\textit{tractor curvature}.  One can show that
\begin{equation}\label{PlainOmega}
\Omega_{ab}{}^D{}_E = \tensor{Z}{^D}^{c} \tensor{Z}{_{E}}^{e}
C_{abce}
- X^{D}Z_{E}{}^{e} \tensor{A}{_{eab}}
+ X_{E}Z^{D}{}^{e} \tensor{A}{_{eab}}\,,
\end{equation}
and
\begin{equation}\label{MinusdeltaOmega}
\nabla^{a}\Omega_{ac}{}^D{}_E
= (n-4) Z^{Dd}\tensor{Z}{_E^e} \tensor{A}{_{cde}}
- \tensor{X}{^D} \tensor{Z}{_{E}^e}\tensor{B}{_{ec}}
+ \tensor{X}{_E} Z^{De}\tensor{B}{_{ec}}.
\end{equation}

Observe that, according to \nn{deltahatomega}, in dimension $n=4$ the
operator $\delta_{\nabla^\mathcal{T}}$ is conformally invariant on
weight 0 tractor-bundle-valued 2-forms.  So
$\nabla^{a}\Omega_{ac}{}^D{}_E$ is conformally invariant in dimension
four. On the other hand, in any dimension
\begin{equation}\label{XZminusZX}
X_E Z^{Dd}-X^D Z_{E}{}^d   
\end{equation}
is conformally invariant, by the conformal transformation rules for
$Z^{Dd}$ and $Z_E{}^d$ given in equation (4) of \cite{GP-CMP}. Thus
\nn{XZminusZX} provides, up to a constant factor, the standard and
well-known conformally invariant injection of 1-forms into the adjoint
tractor bundle $\Lambda^2\mathcal{T}$. Thus in dimension four, the
display \nn{MinusdeltaOmega} is recovering the result \nn{YM-Bach}
mentioned in the Introduction.

%
\subsection{The obstruction tensor in dimension 6}\label{FirstSubsect}
In this subsection, we will describe symbolic computations which
establish the following theorem:
\begin{thm}\label{ObstructionTheorem}
Suppose that $n=6$.  Then
\[
(\CurlyD\nd^{\TBund})_c{}^D{}_E
=
(X_E Z^{Dd}-X^D Z_{E}{}^d)16\ObTenSixcd\,,
\]
and hence
\begin{equation}\label{SecondObTenForm}
\ObTenSixce
=
\frac{1}{32}(Y^EZ_{De}-Y_DZ^{E}{}_e)(\CurlyD\nd^{\TBund})_c{}^D{}_E.
\end{equation}
\end{thm}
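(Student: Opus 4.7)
The plan is to compute $\CurlyD(\nd^{\TBund})$ directly from Definition \ref{DDef}, using the explicit formula \nn{PlainOmega} for the tractor curvature $\Omega=F_{\nd^{\TBund}}$, the formula \nn{MinusdeltaOmega} (with $n=6$) for $\nd^a\Omega_{ac}{}^D{}_E=-(\delta_{\nd^{\TBund}}\Omega)_c{}^D{}_E$, and the splitter identities \nn{connids} for $X$, $Y$, $Z$. Splitting
\[
\CurlyD(\nd^{\TBund})=\delta_{\nd^{\TBund}}Q_2^{\nd^{\TBund}}\Omega-[\delta_{\nd^{\TBund}}\Omega,\Omega],
\]
I would expand each summand along the four natural components of $\End(\TBund)$ projected by the splitters. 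Since \nn{XZminusZX} realises $X_EZ^{Dd}-X^DZ_E{}^d$ as, up to scale, the unique conformally invariant 1-form injection into $\End(\TBund)$ of the relevant weight, Proposition \ref{conformalinv} tells me in advance that only this $X\otimes Z-Z\otimes X$ channel can carry a nontrivial coefficient once the two terms are combined.

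The most substantial step is expanding $Q_2^{\nd^{\TBund}}\Omega=d_{\nd^{\TBund}}\delta_{\nd^{\TBund}}\Omega-4P\#\Omega+2J\Omega$ and then applying one further coupled divergence. Differentiating the right-hand side of \nn{MinusdeltaOmega} and distributing derivatives across the splitters via \nn{connids} yields an $\End(\TBund)$-valued 2-form whose four channels involve $\nd A$, $\nd B$, $A$, $B$, $C$, and algebraic combinations with $P$ and $J$; adding $-4P\#\Omega+2J\Omega$ via \nn{PlainOmega} completes $Q_2^{\nd^{\TBund}}\Omega$. Applying $\delta_{\nd^{\TBund}}$ then invokes the commutator identity \nn{CommuteCovD}, specialised to tractor-valued tensors, which via \nn{FCurveID} and \nn{FCurveID2} reintroduces the tractor curvature $\Omega$ in lower-order couplings. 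Throughout I would systematically reduce terms using the Bianchi consequence $\nd^cC_{abcd}=(n-3)A_{dab}$, the defining relation \nn{Bach} of $B_{ab}$, and the trace identity \nn{TraceNbP}. In parallel I would expand $[\delta_{\nd^{\TBund}}\Omega,\Omega]$ purely algebraically by multiplying \nn{MinusdeltaOmega} with \nn{PlainOmega} and evaluating the tractor contractions via $Y_BX^B=1$, $Z_{Ba}Z^B{}_c=\bg_{ac}$, and the vanishing of all other pairings of $X$, $Y$, $Z$.

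Combining the two pieces, I would verify that the $Y\otimes Z$, $Z\otimes Z$, and $X\otimes Y$ projections cancel identically, as they must by conformal invariance; this cancellation also serves as a strong internal check on the calculation. What remains on the $X_EZ^{Dd}-X^DZ_E{}^d$ channel is a conformally invariant, symmetric, trace-free 2-tensor of weight $-4$ whose leading term is a nonzero constant multiple of $\Delta\nd^a\nd^b C_{cabd}$. By the characterisation of the Fefferman-Graham obstruction tensor in \cite{FG2, Graham-Hirachi} as the unique such tensor up to scale, this surviving coefficient is a scalar multiple of $\ObTenSixcd$, and matching leading symbols fixes the scalar at $16$. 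The equivalent formulation \nn{SecondObTenForm} then follows by contracting with $Y^EZ_{De}-Y_DZ^E{}_e$, whose pairing with $X_EZ^{Dd}-X^DZ_E{}^d$ is twice the identity on 1-forms.

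The main obstacle is the symbolic bulk: the composition $\delta_{\nd^{\TBund}}Q_2^{\nd^{\TBund}}$ acts as a fourth-order operator on $\Omega$ and hence as a sixth-order operator in $\nd^{\TBund}$, expanding into many terms across four tractor channels that must be reduced to canonical form via Bianchi identities and commutators. The conformal-invariance shortcut reduces the essential content to identifying a single leading coefficient on one channel, but explicitly verifying cancellations on the other three channels remains a lengthy, if routine, bookkeeping exercise, best supported by computer algebra in practice.
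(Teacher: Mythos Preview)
Your computational plan matches the paper's proof closely: expand $\CurlyD(\nd^{\TBund})$ via Definition~\ref{DDef}, use \nn{PlainOmega}, \nn{MinusdeltaOmega}, and \nn{connids} to express everything in splitters, organise the result by the pairs $Y\!Z$, $X\!Y$, $X\!X$, $Z\!Z$, $X\!Z$, and invoke computer algebra for the bookkeeping. On that level the proposal is correct and essentially the paper's approach.

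There is, however, a genuine gap in your final identification step. You argue that the surviving $X_EZ^{Dd}-X^DZ_E{}^d$ coefficient is ``a conformally invariant, symmetric, trace-free 2-tensor of weight $-4$'' and then invoke a uniqueness characterisation of $\ObTenSix$ to conclude it equals $16\,\ObTenSix$. The uniqueness statement in \cite{Graham-Hirachi} is that $\ObTen$ is the only such natural invariant whose leading part is \emph{linear} in the jets of the metric; it does not say $\ObTenSix$ is the only invariant of this type and weight. In dimension~6 there are nontrivial conformally invariant, symmetric, trace-free 2-tensors of weight $-4$ built cubically from the Weyl tensor (trace-free parts of contractions such as $C_a{}^{ijk}C_{ijlm}C_{bk}{}^{lm}$), and your coefficient manifestly contains nonlinear curvature terms coming from $[\delta\Omega,\Omega]$ and from $P\#\Omega$. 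Matching leading symbols therefore fixes only the linear part; it cannot rule out an additional Weyl-cubic contribution, so the argument does not pin down the constant $16$ or even the exact tensor. You also assert symmetry and trace-freeness of the coefficient without justification; neither is evident from the raw expansion.

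The paper closes this gap not by a uniqueness principle but by explicit comparison: it writes out the $X\!Z$ coefficient in full (Figure~\ref{XZTermsOne}) and, using \cite{Mathematica,RicciSoftware} together with \nn{Bach}, \nn{CommuteCovD}, \nn{TraceNbP}, \nn{CottonTensor}, \nn{WeylTensor}, verifies term by term that it agrees with the known formula for $16\,\ObTenSix$ in the scaling of \cite{GP-Obstrn}. A secondary point: your remark that the other channels ``must'' vanish by conformal invariance is a heuristic, not a theorem; a conformally invariant tractor-valued section need not lie in the image of the invariant injection \nn{XZminusZX}. You do say you would verify these cancellations directly, which is the right thing to do and is exactly what the paper does (again with computer-algebra assistance). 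But you should not present conformal invariance as a substitute for that verification, and you must replace the uniqueness shortcut with an explicit comparison on the $X\!Z$ channel.
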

Here and throughout our work, our scaling convention for $\ObTenSix$
is the same as in \cite{GP-Obstrn}.  One can expand the right-hand
side of \nn{SecondObTenForm} and compute a symbolic formula which
expresses $\ObTenSixce$ in terms of the Weyl tensor, the Schouten
tensor, and $J$.

\begin{proof}[Proof of Theorem~\ref{ObstructionTheorem}] We begin by applying
Definition~\ref{DDef}.  In this definition, we replace $A$ with
$\nd^{\TBund}$ and $F$ with the tractor curvature $\Omega$.  We work
in dimension 6.  We conclude that
\begin{equation}\label{FullExpression}
\begin{array}{ll}
\lefteqn{(\CurlyD\nd^{\TBund})_c{}^D{}_E=
(\delta d\delta \Omega)_c{}^D{}_E
- 4(\delta (P \# \Omega))_c{}^D{}_E
+ 2 (\delta(J \Omega))_c{}^D{}_E}\vspace{0.5ex}
\\
&
-(\delta \Omega)_b{}^D{}_G\Omega^{b}{}_c{}^G{}_E
+\Omega^b{}_{c}{}^D{}_G(\delta\Omega)_b{}^{G}{}_E
.
\hspace*{19ex}
\end{array}
\end{equation}
In \nn{FullExpression} and for the rest of this subsection, we omit
the subscript $\nd^{\TBund}$ from $\delta$ and $d$.  Our plan will be
to consider the various summands on the right-hand side of
\nn{FullExpression} separately.

Consider the first summand, namely $(\delta d\delta\Omega)_c{}^D{}_E$.
From \nn{dBundle}, \nn{AdjointadV}, and \nn{MinusdeltaOmega}, it
follows that
\begin{equation}\label{d27July19a}
\begin{array}{ll}
\lefteqn{(d \delta \Omega)_{a_1a_2}{}^D{}_E
  =}\vspace{0.5ex}\\
&
-4\nabla_{a_{1}} ( \tensor{Z}{^D^d} \tensor{Z}{_E^e} \tensor{A}{_{ a_{2} d e}}) 
+ 2\nabla_{a_{1}} (\tensor{X}{^{D}} \tensor{Z}{_{E}^e}\tensor{B}{_{e a_{2}}})
- 2\nabla_{a_{1}} (\tensor{X}{_{E}} \tensor{Z}{^{D}^e}\tensor{B}{_{e a_{2}}})
.
\end{array}
\end{equation}
On the right-hand side of this display, we antisymmetrise the indices
$a_{1}$ and $a_{2}$.  By \nn{connids} and \nn{d27July19a},
\begin{equation}\label{d27July19b}
\begin{array}{ll}
  \lefteqn{(d \delta \Omega)_{a_1a_2}{}^D{}_E =}
\vspace{0.5ex}
\\
  &4 \tensor{P}{_{a_{1}}^d} \tensor{X}{^D} \tensor{Z}{_E^e}
\tensor{A}{_{a_{2} d e}}
+ 4 \tensor{Y}{^D}
  \tensor{Z}{_E^e} \tensor{A}{_{a_{2}a_1 e}}
+
  4 \tensor{Z}{^{Dd}} \tensor{P}{_{a_{1}}^e} \tensor{X}{_E}
  \tensor{A}{_{a_{2} d e}}
\vspace{0.5ex}
\\
&
+4 \tensor{Z}{^{Dd}}\tensor{Y}{_E}
\tensor{A}{_{a_{2} d a_1}}
- 4 \tensor{Z}{^{Dd}} \tensor{Z}{_E^e}\nabla_{a_{1}}
\tensor{A}{_{a_{2} d e}}
+ 2Z^{D}{}_{a_{1}}
  \tensor{Z}{_{E}^e} \tensor{B}{_{e a_{2}}}
\vspace{0.5ex}
\\
&
+  2\tensor{X}{^{D}} \tensor{Z}{_{E}^e} 
\nabla_{a_{1}} \tensor{B}{_{e a_{2}}}
- 2\tensor{Z}{_{E a_{1}}}
  \tensor{Z}{^{De}} \tensor{B}{_{e a_{2}}}
- 2\tensor{X}{_{E}} \tensor{Z}{^{De}} 
\nabla_{a_{1}} \tensor{B}{_{e a_{2}}}.
\end{array}
\end{equation}
Here we again antisymmetrise the indices $a_1$ and $a_2$.  From
\nn{AdjointadV}, \nn{connids}, \nn{d27July19b}, and the fact that
$A_{abc}$ is trace-free, it follows that $(\delta
d\delta\Omega)_{a_2}{}^D{}_E$ is given by the formula in
Figure~\ref{deltaddeltaOmega}.
\begin{figure}
\begin{align}\nonumber 
&
4\tensor{X}{^D}\tensor{X}{_E}
\tensor{P}{_{a_{1}}^d}\tensor{P}{^{a_{1}e}}
\tensor{A}{_{a_{2} d e}}
+4\tensor{X}{^D} 
\tensor{Y}{_E}\tensor{P}{_{a_{1}}^d}\tensor{A}{_{a_{2} d}^{a_1}}
- 4\tensor{X}{^D} \tensor{Z}{_E^e}
\tensor{P}{_{a_{1}}^d} \nabla^{a_{1}} \tensor{A}{_{a_{2} d e}}
\\ \nonumber 
&
-4\tensor{X}{^D} \tensor{Z}{_E^e} (\nabla^{a_{1}} \tensor{P}{_{a_{1}}^d} )
\tensor{A}{_{a_{2} d e} }
-4 \tensor{Z}{^{Da_{1}}} \tensor{Z}{_E^e} 
\tensor{P}{_{a_{1}}^d} \tensor{A}{_{a_{2} d e}} 
+ 4\tensor{Y}{^D} \tensor{X}{_E}\tensor{P}{^{a_{1} e}}
\tensor{A}{_{a_{2} a_{1} e} }
\\ \nonumber 
&
-4\tensor{Y}{^D} \tensor{Z}{_E^e} \nabla^{a_{1}} 
\tensor{A}{_{a_{2} a_{1} e} }
-4 \tensor{Z}{^{Dd}} \tensor{Z}{_E^e} \tensor{P}{^{ a_{1} }_d}
\tensor{A}{_{a_{2} a_{1} e}}
+ 4\tensor{X}{^D}\tensor{X}{_E}
\tensor{P}{^{ a_{1} d } }\tensor{P}{_{ a_{1} }^e}
\tensor{A}{_{a_{2} d e} }
\\ \nonumber 
&
+ 4 \tensor{Y}{^D}\tensor{X}{_E}\tensor{P}{_{ a_{1} }^e} 
A_{a_{2}}{}^{a_1}{}_e
-4\tensor{Z}{^{Dd}} \tensor{X}{_E} \tensor{P}{_{a_{1}}^e} 
\nabla^{a_{1}} \tensor{A}{_{a_{2} d e}}
-4 \tensor{Z}{^{Dd}} \tensor{X}{_E} (\nabla^{a_{1}} 
\tensor{P}{_{a_{1}}^e}) \tensor{A}{_{a_{2} d e} }
\\ \nonumber 
&
-4\tensor{Z}{^{Dd}}\tensor{Z}{_E^{ a_{1} }}\tensor{P}{_{a_{1}}^e}
\tensor{A}{_{a_{2} d e}}
+4 \tensor{X}{^D}\tensor{Y}{_E}\tensor{P}{^{a_{1} d}}
\tensor{A}{_{a_{2} d a_{1}}}
-4\tensor{Z}{^{Dd}} \tensor{Y}{_E} 
\nabla^{a_{1}} \tensor{A}{_{a_{2} d a_{1}}}
\\ \nonumber 
&
-4\tensor{Z}{^{Dd}} \tensor{Z}{_E^e}\tensor{P}{^{a_{1}}_e}
\tensor{A}{_{a_{2} d a_{1}}}
-4 \tensor{X}{^D} \tensor{Z}{_E^e}\tensor{P}{^{a_{1} d}}\nabla_{a_{1}}
\tensor{A}{_{a_{2} d
    e}}
-4
\tensor{Y}{^D} \tensor{Z}{_E^e}\nabla_{a_{1}} A_{a_{2}}{}^{a_1}{}_e
\\ \nonumber 
&
-4 \tensor{Z}{^{Dd}} \tensor{X}{_E}\tensor{P}{^{a_{1} e}}
\nabla_{a_{1}} 
\tensor{A}{_{a_{2} d e}}
-4 \tensor{Z}{^{Dd}} \tensor{Y}{_E}\nabla_{a_{1}} 
\tensor{A}{_{a_{2} d}^{a_1}}
+4 \tensor{Z}{^{Dd}} \tensor{Z}{_E^e} 
\nabla^{a_{1}} \nabla_{a_{1}} \tensor{A}{_{a_{2} d e}}
\\  \nonumber 
&
+2\tensor{X}{^{D}}\tensor{Z}{_{E}^e}
\tensor{P}{^{a_{1}}_{a_{1}}}\tensor{B}{_{e a_{2}}}
+
2\tensor{Y}{^{D}} \tensor{Z}{_{E}^e}\tensor{\g}{^{a_{1}}_{a_{1}}}
\tensor{B}{_{e a_{2}}}
+2\tensor{Z}{^D_{a_{1}}}\tensor{X}{_{E}}
\tensor{P}{^{a_{1} e}}\tensor{B}{_{e a_{2} }}
+2\tensor{Z}{^D_{a_{1}}} 
\tensor{Y}{_{E}} \tensor{B}{^{a_1}_{a_{2} }}
\\ \nonumber 
&
-2 \tensor{Z}{^{D}_{a_1}} \tensor{Z}{_{E}^e} 
\nabla^{a_{1}} \tensor{B}{_{e a_{2}}}
+2 \tensor{X}{^{D}}\tensor{Y}{_{E}} 
\nabla_{a_{1}} B^{a_1}{}_{a_{2}}
-2 \tensor{X}{^{D}} \tensor{Z}{_{E}^e} 
\nabla^{a_{1}} \nabla_{a_{1}} \tensor{B}{_{e a_{2}}}
\\ \nonumber 
&
-2 \tensor{Z}{^{Da_{1}}} \tensor{Z}{_{E}^e} 
\nabla_{a_{1}} \tensor{B}{_{e a_{2}}}
-2 \tensor{Z}{_{E a_{1}}}\tensor{X}{^{D}}
\tensor{P}{^{a_{1} e}}\tensor{B}{_{e a_{2} }}
-2 \tensor{Z}{_{E a_{1}}} 
\tensor{Y}{^{D}} \tensor{B}{^{a_1}_{a_{2} }}
-2\tensor{X}{_{E}}\tensor{Z}{^{De}}
\tensor{P}{^{a_{1}}_{a_{1}}}\tensor{B}{_{e a_{2}}}
\\ \nonumber 
&
-
2\tensor{Y}{_{E}} \tensor{Z}{^{De}}\tensor{\g}{^{a_{1}}_{a_{1}}}
\tensor{B}{_{e a_{2}}}
+2\tensor{Z}{_{E a_1}} \tensor{Z}{^{D}^e} 
\nabla^{a_{1}} \tensor{B}{_{e a_{2}}}
-2\tensor{X}{_{E}}\tensor{Y}{^{D}} 
\nabla_{a_{1}} B^{a_1}{}_{a_{2}}
\\ \nonumber 
&
+2\tensor{X}{_{E}} \tensor{Z}{^{D}^e} 
\nabla^{a_{1}} \nabla_{a_{1}} \tensor{B}{_{e a_{2}}}
+2\tensor{Z}{_{E}^{a_{1}}} \tensor{Z}{^{De}} 
\nabla_{a_{1}} \tensor{B}{_{e a_{2}}}. 
\end{align}
\caption{Symbolic formula for $(\delta d\delta\Omega)_{a_2}{}^D{}_E$ in
  dimension 6}\label{deltaddeltaOmega}
\end{figure}
In this formula, we antisymmetrise the \textit{lower} indices $a_1$
and $a_2$.  The upper index $a_1$ does \textit{not} participate in the
antisymmetrisation.  The index $a_2$ is a free index.  One may replace
this index with the index $c$ to conform with \nn{FullExpression}.

Now consider the second summand on the right-hand side of
\nn{FullExpression}, namely $-4(\delta(P\#\Omega))_c{}^D{}_E$.  By
\nn{PlainOmega},

\begin{align}
\nonumber
-4(P \# \Omega)_{bc}{}^D{}_E =\,&
-4\tensor{P}{_b^k} \tensor{Z}{^{Dd}}\tensor{Z}{_E^e} 
\tensor{C}{_{kcde}}
+4 \tensor{P}{_b^k} \tensor{X}{^D} \tensor{Z}{_E^e} 
\tensor{A}{_{ekc}}
-4 \tensor{P}{_b^k} \tensor{X}{_E} \tensor{Z}{^{Dd}}
\tensor{A}{_{dkc}}
\\ \nonumber
&
-4\tensor{P}{_c^k} \tensor{Z}{^{Dd}} \tensor{Z}{_E^e} \tensor{C}{_{bkde}} 
+4\tensor{P}{_c^k} \tensor{X}{^D} \tensor{Z}{_E^e} \tensor{A}{_{ebk}}
-4\tensor{P}{_c^k} \tensor{X}{_E} \tensor{Z}{^{Dd}}
\tensor{A}{_{dbk}}.
\end{align}
From this and from \nn{AdjointadV}, it follows that
\[
\begin{array}{ll}
%
%
  \lefteqn{-4(\delta ( P \# \Omega))_c{}^D{}_E=}\vspace{0.5ex}
\\
%
%
&
4\nabla^{b}( \tensor{P}{_b^k} \tensor{Z}{^{Dd}} \tensor{Z}{_E^e}
\tensor{C}{_{kcde}} )
-4\nabla^{b} ( \tensor{P}{_b^k} \tensor{X}{^D}\tensor{Z}{_E^e}
\tensor{A}{_{ekc}} )
+4\nabla^{b}( \tensor{P}{_b^k} \tensor{X}{_E} \tensor{Z}{^{Dd}}
\tensor{A}{_{dkc}} )\vspace{0.5ex}
\\
%
%
&
+4\nabla^{b} ( \tensor{P}{_c^k} \tensor{Z}{^{Dd}}
\tensor{Z}{_E^e} \tensor{C}{_{bkde}} )
-4\nabla^{b} ( \tensor{P}{_c^k} \tensor{X}{^D} \tensor{Z}{_E^e}
\tensor{A}{_{ebk}} )
+4\nabla^{b} ( \tensor{P}{_c^k} \tensor{X}{_E} \tensor{Z}{^{Dd}}
\tensor{A}{_{dbk}}).
\end{array}
\]
Thus from \nn{connids} and the fact that the Cotton tensor and the
Weyl tensor are trace-free, it follows that $-4(\delta ( P \#
\Omega))_c{}^D{}_E$ is given by the formula in
Figure~\ref{delPHashOmOne}.
\begin{figure}
\begin{align}\label{delPhash}
\nonumber 
&
-4\tensor{X}{^D}\tensor{Z}{_E^e}\tensor{P}{_b^k}\tensor{P}{^{bd}}
\tensor{C}{_{kcde}}
-4\tensor{Y}{^D}\tensor{Z}{_E^e}\tensor{P}{^d^k}
\tensor{C}{_{kcde}}
-4\tensor{Z}{^D^d} \tensor{X}{_E}\tensor{P}{_b^k}\tensor{P}{^{be}}
\tensor{C}{_{kcde}}
\\ \nonumber 
&
-4\tensor{Z}{^D^d}\tensor{Y}{_E}\tensor{P}{^e^k}\tensor{C}{_{kcde}}
+4 \tensor{Z}{^{Dd}} \tensor{Z}{_E^e}
(\nabla^{b} \tensor{P}{_b^k})\tensor{C}{_{kcde}}
+4\tensor{Z}{^D^d} \tensor{Z}{_E^e}
\tensor{P}{_b^k}\nabla^{b} \tensor{C}{_{kcde}}
\\ \nonumber 
&
+4\tensor{X}{^D}\tensor{X}{_E}
\tensor{P}{_b^k}\tensor{P}{^{be}}\tensor{A}{_{ekc}}
+4\tensor{X}{^D}\tensor{Y}{_E}
\tensor{P}{^e^k}\tensor{A}{_{ekc}}
-4\tensor{X}{^D}\tensor{Z}{_E^e}
\tensor{P}{_b^k}\nabla^{b}\tensor{A}{_{ekc}}
\\ \nonumber 
&
-4\tensor{X}{^D} \tensor{Z}{_E^e}(\nabla^{b} \tensor{P}{_b^k}) 
\tensor{A}{_{ekc}}
-4\tensor{Z}{^D^b} \tensor{Z}{_E^e}
\tensor{P}{_b^k}\tensor{A}{_{ekc}} 
-4
\tensor{X}{_E}\tensor{X}{^D}
\tensor{P}{_b^k}\tensor{P}{^{bd}}\tensor{A}{_{dkc}}
\\ \nonumber 
&
-4\tensor{X}{_E}\tensor{Y}{^D}
\tensor{P}{^d^k}\tensor{A}{_{dkc}}
+4\tensor{X}{_E}\tensor{Z}{^D^d}
\tensor{P}{_b^k}\nabla^{b} \tensor{A}{_{dkc}}
+4 \tensor{X}{_E} \tensor{Z}{^D^d}
(\nabla^{b}\tensor{P}{_b^k})\tensor{A}{_{dkc}}
\\ \nonumber 
&
+4\tensor{Z}{_E^b} \tensor{Z}{^D^d}
\tensor{P}{_b^k}\tensor{A}{_{dkc}}
-4\tensor{X}{^D}\tensor{Z}{_E^e}
\tensor{P}{_c^k}\tensor{P}{^{bd}}
\tensor{C}{_{bkde}}
-4\tensor{Z}{^D^d}\tensor{X}{_E}
\tensor{P}{_c^k}\tensor{P}{^{be}}\tensor{C}{_{bkde}}
\\ \nonumber 
&
+4\tensor{Z}{^D^d} \tensor{Z}{_E^e}
(\nabla^{b} \tensor{P}{_c^k})\tensor{C}{_{bkde}}
+4\tensor{Z}{^D^d}\tensor{Z}{_E^e}
\tensor{P}{_c^k}\nabla^{b}\tensor{C}{_{bkde}}
+4\tensor{X}{^D}\tensor{X}{_E}
\tensor{P}{_c^k}\tensor{P}{^{be}}\tensor{A}{_{ebk}}
\\ \nonumber 
&
-4\tensor{X}{^D}\tensor{Z}{_E^e}
\tensor{P}{_c^k}\nabla^{b}\tensor{A}{_{ebk}}
-4\tensor{X}{^D}\tensor{Z}{_E^e}
(\nabla^{b}\tensor{P}{_c^k})\tensor{A}{_{ebk}}
-4\tensor{Z}{^D^b}\tensor{Z}{_E^e}
\tensor{P}{_c^k}\tensor{A}{_{ebk}} 
\\ \nonumber 
&
-4\tensor{X}{_E}\tensor{X}{^D} 
\tensor{P}{_c^k}\tensor{P}{^{bd}}\tensor{A}{_{dbk}}
+4\tensor{X}{_E}\tensor{Z}{^D^d}
\tensor{P}{_c^k}\nabla^{b}\tensor{A}{_{dbk}}
+4\tensor{X}{_E} \tensor{Z}{^D^d}
(\nabla^{b} \tensor{P}{_c^k})\tensor{A}{_{dbk}}
\\ \nonumber 
&
+4\tensor{Z}{_E^b}\tensor{Z}{^D^d}
\tensor{P}{_c^k}\tensor{A}{_{dbk}}.
\end{align}
\caption{Symbolic formula for $-4(\delta ( P \# \Omega))_c{}^D{}_E$ in
  dimension 6}
\label{delPHashOmOne}
\end{figure}

We now consider the third summand on the right-hand side of
\nn{FullExpression}, namely $2(\delta(J\Omega))_c{}^D{}_E$.
By \nn{AdjointadV} and \nn{PlainOmega}, 
\[
\begin{array}{ll}
\lefteqn{2(\delta( J \Omega))_c{}^D{}_E
=-2\nabla^{a} ( J \Omega_{ac}{}^D{}_E)=}\vspace{0.5ex}
\\
&
-2\nabla^{a} (
J \tensor{Z}{^D^d} \tensor{Z}{_E^e} \tensor{C}{_{acde}}
-J\tensor{X}{^D}\tensor{Z}{_E^e} \tensor{A}{_{eac}}
+J\tensor{X}{_E}\tensor{Z}{^D^e} \tensor{A}{_{eac}}
).
\end{array}
\]
From \nn{connids} and from the fact that the Cotton and Weyl tensors
are trace-free, it follows that
\begin{equation}\label{ThirdSummand}
\begin{array}{ll}
%
%
\lefteqn{2(\delta(J\Omega))_c{}^D{}_E=}\vspace{0.5ex}
\\ 
&
-2\tensor{Z}{^D^d} \tensor{Z}{_E^e}( \nabla^{a} J)\tensor{C}{_{acde}} 
+2\tensor{X}{^D}\tensor{Z}{_E^e}J \tensor{P}{^{ad}}\tensor{C}{_{acde}}
-2\tensor{Z}{^D^d} \tensor{Z}{_E^e}J
\nabla^{a} \tensor{C}{_{acde}}
\vspace{0.5ex}
\\ 
&
+2\tensor{X}{_E}\tensor{Z}{^D^d} J\tensor{P}{^{ae}}
\tensor{C}{_{acde}}
+2\tensor{X}{^{D}}\tensor{Z}{_{E}^e}(\nabla^{a} J)
\tensor{A}{_{eac}} 
-2\tensor{X}{_{E}}\tensor{Z}{^D^e}(\nabla^{a} J)
\tensor{A}{_{eac}}
\vspace{0.5ex}
\\ 
&
+2\tensor{Z}{^D^a} \tensor{Z}{_{E}^e}J\tensor{A}{_{eac}}
-2\tensor{Z}{_{E}^a} \tensor{Z}{^D^e}J\tensor{A}{_{eac}}
+2\tensor{X}{^{D}}\tensor{Z}{_{E}^e}J\nabla^{a} \tensor{A}{_{eac}}
\vspace{0.5ex}
\\ 
&
-2\tensor{X}{_{E}}\tensor{Z}{^D^e}J\nabla^{a} \tensor{A}{_{eac}}.
\end{array}
\end{equation}

The fourth summand is $-(\delta\Omega)_b{}^D{}_G\Omega^b{}_c{}^G{}_E$.
By \nn{AdjointadV}, \nn{PlainOmega} and \nn{MinusdeltaOmega},
\[
\begin{array}{ll}
\lefteqn{-(\delta\Omega)_b{}^D{}_G\Omega^b{}_c{}^G{}_E=}\vspace{0.5ex}
\\
&
(
2 \tensor{Z}{^{Dd}} \tensor{Z}{_G^k} \tensor{A}{_{bdk}} 
-\tensor{X}{^D} \tensor{Z}{_G^k} \tensor{B}{_{kb}}
+\tensor{X}{_G}
\tensor{Z}{^{Dk}} \tensor{B}{_{kb}}
) \times\vspace{0.5ex}
\\
&
( \tensor{Z}{^{Gi}} \tensor{Z}{_E^e} \tensor{C}{^b_{cie}} 
- \tensor{X}{^G} \tensor{Z}{_E^e} \tensor{A}{_e^b_c} + \tensor{X}{_E} 
\tensor{Z}{^{Ge}} \tensor{A}{_e^b_c}
).
\end{array}
\]
From this and from the rules for the tractor metric as applied to the
splitting operators $Y^B$, $Z^{Bc}$, and $X^B$, it follows that
\begin{equation}\label{FourthSummand}
\begin{array}{ll}
\lefteqn{-(\delta\Omega)_b{}^D{}_G\Omega^b{}_c{}^G{}_E=}\vspace{0.5ex}
\\
&
2\tensor{Z}{^{Dd}} \tensor{Z}{_E^e} \tensor{A}{_{bd}^i} 
\tensor{C}{^b_{cie}}
+2\tensor{Z}{^{Dd}} \tensor{X}{_E} 
\tensor{A}{_{bd}^e} \tensor{A}{_e^b_c}
-\tensor{X}{^D} \tensor{Z}{_E^e} \tensor{B}{^i_b}
\tensor{C}{^b_{cie}}
\\
&
-\tensor{X}{^D} \tensor{X}{_E} \tensor{B}{^e_b} \tensor{A}{_e^b_c}.
\end{array}
\end{equation}

The fifth summand is $\Omega^b{}_c{}^D{}_G(\delta\Omega)_b{}^G{}_E$.  By
\nn{AdjointadV}, \nn{PlainOmega} and \nn{MinusdeltaOmega},
\[
\begin{array}{ll}
\lefteqn{\Omega^b{}_c{}^D{}_G(\delta\Omega)_b{}^G{}_E=}\vspace{0.5ex}
\\
&
(
\tensor{Z}{^{Dd}} \tensor{Z}{_G^k} \tensor{C}{^b_{cdk}} 
- \tensor{X}{^D} \tensor{Z}{_G^k} \tensor{A}{_k^b_c}
+ \tensor{X}{_G}\tensor{Z}{^{Dk}}\tensor{A}{_k^b_c}  \vspace{0.5ex}
) \times
\\
&
(
-2\tensor{Z}{^{Gi}} \tensor{Z}{_E^e} \tensor{A}{_{bie}} 
+ \tensor{X}{^G} \tensor{Z}{_E^e} \tensor{B}{_{eb}}
- \tensor{X}{_E}\tensor{Z}{^{Ge}} \tensor{B}{_{eb}}
).
\end{array}
\]
Thus
\begin{equation}\label{FifthSummand}
\begin{array}{ll}
\lefteqn{\Omega^b{}_c{}^D{}_G(\delta\Omega)_b{}^G{}_E=}\vspace{0.5ex}
\\
&
-2\tensor{Z}{^{Dd}} \tensor{Z}{_E^e} \tensor{C}{^b_{cd}^i}\tensor{A}{_{bie}}
-\tensor{Z}{^{Dd}} \tensor{X}{_E}\tensor{C}{^b_{cd}^e} \tensor{B}{_{eb}}
+2\tensor{X}{^D}\tensor{Z}{_E^e}\tensor{A}{^{ib}_c}\tensor{A}{_{bie}}
\vspace{0.5ex}
\\
&
+\tensor{X}{^D} \tensor{X}{_E} \tensor{A}{^{eb}_c} \tensor{B}{_{eb}}.
\end{array}
\end{equation}

Our next step will be to combine (implicitly) several of the symbolic
formulae that we computed above.  Note first, however, that in the
symbolic formula appearing in Figure~\ref{deltaddeltaOmega}, the index
$a_2$ is a free index.  \textit{After} we perform the
antisymmetrisation of the lower indices $a_1$ and $a_2$, we may
replace the index $a_2$ with the index $c$.  We can then combine the
symbolic formulae appearing in figures~\ref{deltaddeltaOmega} and
\ref{delPHashOmOne} together with the symbolic formulae appearing on
the right-hand sides of \nn{ThirdSummand}, \nn{FourthSummand}, and
\nn{FifthSummand}.  The result will be a symbolic formula for
$(\CurlyD\nd^{\TBund})_c{}^D{}_E$, which we will refer to as the
``total'' symbolic formula for $(\CurlyD\nd^{\TBund})_c{}^D{}_E$.  As
we will now show, one can simplify this total symbolic formula and
obtain a new total symbolic formula for
$(\CurlyD\nd^{\TBund})_c{}^D{}_E$ in which every term contains an
occurrence of either $X_EZ^{Dd}$ or $X^DZ_{E}{}^d$.

Note first that none of the terms in the total symbolic formula for
$(\CurlyD\nd^{\TBund})_c{}^D{}_E$ contain an occurrence of $Y_EY^D$.

Now consider the terms in the total symbolic formula that contain
$X^DY_E$ or $X_EY^D$.  The sum of these terms is
\begin{equation}\label{XYTermsOne}
\begin{array}{ll}
\lefteqn{
(X^DY_E-X_EY^D)(
2P_{a_1}{}^dA_{cd}{}^{a_1}
-2P_c{}^dA_{a_1d}{}^{a_1}
+2P^{a_1d}A_{cda_1}
}\vspace{0.5ex}
\\
&
-2P^{a_1d}A_{a_1dc}
+\nd_{a_1}B^{a_1}{}_c
-\nd_cB^{a_1}{}_{a_1}
+4P^{ek}A_{ekc}).\hspace{6.0ex}
\end{array}
\end{equation}
This follows by inspection and the fact that $A_{abc}$ is
antisymmetric in $b$ and $c$.  But $P_{ab}$ is symmetric, and
$A_{abc}$ and $B_{ab}$ are trace-free.  Thus \nn{XYTermsOne} is equal
to
\[
(X^DY_E-X_EY^D)(\nd_aB^a{}_c+2P^{ek}A_{ekc}).
\]
A short symbolic computation using \cite{Mathematica} and
\cite{RicciSoftware}, along with \nn{Bach}, \nn{CommuteCovD},
\nn{TraceNbP}, \nn{CottonTensor}, and \nn{WeylTensor}, shows that
$\nd_aB^{a}{}_c+2P^{ek}A_{ekc}=0$ in dimension 6.  Thus
\nn{XYTermsOne} is zero.

Next, consider the terms in the total symbolic formula for
$(\CurlyD\nd^{\TBund})_c{}^D{}_E$ in which $X^DX_E$ appears.  The sum
of these terms is
\begin{equation}\label{XXTermsOne}
\begin{array}{ll}
\lefteqn{
X^DX_E(
2P_{a_1}{}^dP^{a_1e}A_{cde}
-2P_c{}^dP^{a_1e}A_{a_1de}
+2P^{a_1d}P_{a_1}{}^eA_{cde}
-2P^{a_1d}P_{c}{}^eA_{a_1de}
}\vspace{0.5ex}
\\
&
+4P_{b}{}^kP^{be}A_{ekc}
-4P_b{}^kP^{bd}A_{dkc}
+4P_{c}{}^kP^{be}A_{ebk}
-4P_{c}{}^kP^{bd}A_{dbk}
-B^{e}{}_bA_{e}{}^b{}_c
\hspace{1.0ex}\vspace{0.5ex}
\\
&
+A^{eb}{}_cB_{eb}).
\end{array}
\end{equation}
Note that $P_{a_1}{}^dP^{a_1e}$ is symmetric in $d$ and $e$ and that
$A_{cde}$ is antisymmetric in $d$ and $e$.  Thus in the parenthesised
expression in \nn{XXTermsOne}, terms~1 and 3 are zero.  Since
$A_{a_1de}$ is antisymmetric in $d$ and $e$, it follows that terms~2
and 4 cancel.  Terms~5 and 6 cancel, terms~7 and 8 cancel, and terms~9
and 10 cancel.  Thus \nn{XXTermsOne} is zero.

We now consider the terms in the total symbolic formula for
$(\CurlyD\nd^{\TBund})_c{}^D{}_E$ in which $Y^DZ_{E}{}^i$,
$Y_EZ^{Di}$, $Y^DZ_{Ei}$, or $Y_EZ^{D}{}_i$ appears.  Here $i$ is a
dummy index.  The total symbolic formula may use some index other than
$i$ here.  We will also consider the terms of the total symbolic
formula in which $Y^DZ_{Ec}$ or $Y_EZ^{D}{}_c$ appears.  The sum of
the terms under consideration is as follows:
\begin{equation}\label{YZTermsOne}
\begin{array}{l}
(Y^DZ_E{}^i-Y_EZ^{Di})(
-2\nd^{a_1}A_{ca_1i}
+2\nd^{a_1}A_{a_1ci}
-2\nd_{a_1}A_{c}{}^{a_1}{}_i
+2\nd_cA_{a_1}{}^{a_1}{}_i\vspace{0.5ex}
\\
\hspace*{3.5ex}
+\g^{a_1}{}_{a_1}B_{ic}
-\g^{a_1}{}_cB_{ia_1}
-B_{ic}
-4P^{dk}C_{kcdi})\vspace{0.5ex}
\\
+(Y^DZ_{Ec}-Y_EZ^{D}{}_c)B^{a_1}{}_{a_1}\,.
\end{array}
\end{equation}
This follows from inspection of the total symbolic formula, from the
fact that $A_{abc}$ is antisymmetric in $b$ and $c$, and from the fact
that $C_{abcd}$ is antisymmetric in $c$ and $d$.  Now recall that the
Cotton tensor is trace-free and the Bach tensor is symmetric and trace
free.  Since $n=6$, it follows that \nn{YZTermsOne} is equal to
\begin{equation}\label{YZTermsTwo}
\begin{array}{l}
(Y^DZ_E{}^i-Y_EZ^{Di})(4B_{ci}
-4(\nd^{a_1}A_{ca_1i}+P^{kd}C_{kcdi})
+2\nd^{a_1}A_{a_1ci}).
\end{array}
\end{equation}
It is well-known that $\nd_aA^a{}_{bc}=0$.  (See formula~(2.7) of
\cite{Gover-Nurowski}, for example.)  Thus \nn{YZTermsTwo} is zero, by
\nn{Bach}, and hence \nn{YZTermsOne} is zero as well.

Now consider the terms of the total symbolic formula for
$(\CurlyD\nd^{\TBund})_{c}{}^D{}_E$ in which $Z^{Di}Z_E{}^k$,
$Z^D{}_cZ_{E}{}^e$, or $Z^{De}Z_{Ec}$ appears.  Here $i$, $k$, and $e$
are dummy indices; in the total symbolic formula, any of these indices
may appear as a lower index.  The sum of the terms under consideration
is given by the symbolic expression on the left-hand side of the
equation in Figure~\ref{ZZTermsOne}.
%
%
\begin{figure}
\[
\begin{array}{l}
Z^{Di}Z_E{}^k(
-2P_i{}^dA_{cdk}
+2P_c{}^dA_{idk}
-2P^{a_1}{}_iA_{ca_1k}
+2P^{a_1}{}_iA_{a_1ck}
-2P_k{}^eA_{cie}
+2P_c{}^eA_{kie}
\vspace{0.5ex}
\\
\hspace*{3.5ex}
-2P^{a_1}{}_kA_{cia_1}
+2P^{a_1}{}_kA_{a_1ic}
+2\nd^{a_1}\nd_{a_1}A_{cik}
-2\nd^{a_1}\nd_cA_{a_1ik}
-\nd_iB_{kc}
-\nd_iB_{kc}\vspace{0.5ex}
\\
\hspace*{3.5ex}
+\nd_cB_{ki}
+\nd_kB_{ic}
+\nd_kB_{ic}
-\nd_cB_{ik}
+4(\nd^bP_{b}{}^a)C_{acik}
+4P_{b}{}^a\nd^bC_{acik}\vspace{0.5ex}
\\
\hspace*{3.5ex}
-4P_i{}^aA_{kac}
+4P_k{}^aA_{iac}
+4(\nd^bP_c{}^a)C_{baik}
+4P_c{}^a\nd^bC_{baik}
-4P_c{}^aA_{kia}\vspace{0.5ex}
\\
\hspace{3.5ex}
+4P_c{}^aA_{ika}
-2(\nd^aJ)C_{acik}
-2J\nd^aC_{acik}
+2JA_{kic}
-2JA_{ikc}
+2A_{bi}{}^aC^{b}{}_{cak}\vspace{0.5ex}
\\
\hspace*{3.5ex}
-2C^b{}_{ci}{}^aA_{bak}
)\vspace{0.5ex}
\\
+Z^D{}_cZ_E{}^e\nd^{a_1}B_{ea_1}-Z^{De} Z_{Ec}\nd^{a_1}B_{ea_1}\vspace{0.5ex}
\\
\hspace*{3.5ex}=\vspace{0.5ex}
\\
Z^{Di}Z_{E}{}^k(-4P_i{}^{d}A_{cdk}
+2P_c{}^aA_{ika}
+2P^{a_1}{}_iA_{a_1ck}
-4P_{k}{}^{e}A_{cie}
-2P_c{}^aA_{kia}\vspace{0.5ex}
\\
\hspace*{3.5ex}
+2P^{a_1}{}_kA_{a_1ic}
+2\nd^{a_1}\nd_{a_1}A_{cik}
-2\nd^{a_1}\nd_{c}A_{a_1ik}
-2\nd_iB_{kc}
+2\nd_kB_{ic}\vspace{0.5ex}
\\
\hspace*{3.5ex}
+2(\nd^aJ)C_{acik}
+4P_{b}{}^a\nd^bC_{acik}
-4P_i{}^aA_{kac}
+4P_k{}^aA_{iac}
+4(\nd^{b}P_{c}{}^a)C_{baik}\vspace{0.5ex}
\\
\hspace*{3.5ex}
+4P_c{}^a\nd^bC_{baik}
-2J\nd^aC_{acik}
+2JA_{kic}
-2JA_{ikc}
+2A_{bi}{}^aC^b{}_{cak}
-2C^{b}{}_{ci}{}^aA_{bak})\vspace{0.5ex}
\\
+Z^D{}_cZ_E{}^e\nd^{a_1}B_{ea_1}-Z^{De}Z_{Ec}\nd^{a_1}B_{ea_1}
\end{array}
\]
\caption{Sum of the terms of the total symbolic formula for
  $(\CurlyD\nd^{\TBund})_c{}^D{}_E$ in which $Z^{Di}Z_E{}^k$,
  $Z^D{}_cZ_{E}{}^e$, or $Z^{De}Z_{Ec}$ appears.}\label{ZZTermsOne}
\end{figure}
%
%
  This follows from direct inspection of the total symbolic formula
  for $(\CurlyD\nd^{\TBund})_{c}{}^D{}_E$.  The equation in the figure
  follows from simplification of its left-hand side.  To perform this
  simplification, one may use the symmetry of $B_{ab}$, the
  antisymmetry of $A_{abc}$ in $b$ and $c$, and \nn{TraceNbP}.  By
  using \cite{Mathematica} and \cite{RicciSoftware}, one can show that
  the right-hand side of the equation is zero.  To do this, one may
  begin with the right-hand side of the equation and use \nn{Bach} and
  \nn{CottonTensor} and to express $A_{abc}$ and $B_{ab}$ in terms of
  $P_{ab}$ and $C_{abcd}$.  If one then applies \nn{CommuteCovD} in a
  suitable way, all derivatives of order greater than 1 cancel.  One
  may then use the Bianchi identities and \nn{WeylTensor} to show that
  the remaining terms cancel.

Finally, consider the terms of the total symbolic formula for
$(\CurlyD\nd^{\TBund})_c{}^D{}_E$ in which $X_EZ^D{}^i$, $X^DZ_E{}^i$,
$X_EZ^D{}_c$, or $X^DZ_E{}_c$ occurs.  Here $i$ is a dummy index, as
before.  In the total symbolic formula, this $i$ may appear as a lower
index.  The sum of the terms under consideration is given by the
leftmost member of the equation in Figure~\ref{XZTermsOne}.
%
%
\begin{figure}
\[
\begin{array}{ll}
%
%
X_{E}Z^{Di}(
-2P_{a_1}{}^e\nd^{a_1}A_{cie}
+2P_c{}^e\nd^{a_1}A_{a_1ie}
-2(\nd^{a_1}P_{a_1}{}^e)A_{cie}
+2(\nd^{a_1}P_c{}^e)A_{a_1ie}
\vspace{0.5ex}
\\
\hspace*{3.5ex}
-2P^{a_1e}\nd_{a_1}A_{cie}
+2P^{a_1e}\nd_{c}A_{a_1ie}
+P_i{}^eB_{ec}
-P^{a_1}{}_{a_1}B_{ic}
+P^{a_1}{}_cB_{ia_1}
+\nd^{a_1}\nd_{a_1}B_{ic}\vspace{0.5ex}
\\
\hspace*{3.5ex}
-\nd^{a_1}\nd_{c}B_{ia_1}
-4P_{b}{}^kP^{be}C_{kcie}
+4P_b{}^k\nd^bA_{ikc}
+4(\nd^bP_{b}{}^k)A_{ikc}
-4P_c{}^kP^{be}C_{bkie}\vspace{0.5ex}
\\
\hspace*{3.5ex}
+4P_{c}{}^k\nd^bA_{ibk}
+4(\nd^bP_{c}{}^k)A_{ibk}
+2JP^{ae}C_{acie}
-2(\nd^aJ)A_{iac}
-2J\nd^aA_{iac}\vspace{0.5ex}
\\
\hspace*{3.5ex}
+2A_{bi}{}^eA_{e}{}^b{}_c
-C^b{}_{ci}{}^eB_{eb})\vspace{0.5ex}
\\
\hspace*{7.0ex}-Z^D{}_cX_EP^{a_1e}B_{ea_1}\vspace{0.5ex}
\\
+X^DZ_E{}^i(
-2P_{a_1}\!{}^d\nd^{a_1}A_{cdi}
+2P_c{}^d\nd^{a_1}A_{a_1di}
-2(\nd^{a_1}P_{a_1}{}^d)A_{cdi}
+2(\nd^{a_1}P_c{}^d)A_{a_1di}\vspace{0.5ex}
\\
\hspace*{3.5ex}
-2P^{a_1d}\nd_{a_1}A_{cdi}
+2P^{a_1d}\nd_cA_{a_1di}
+P^{a_1}{}_{a_1}B_{ic}
-P^{a_1}{}_cB_{ia_1}
-\nd^{a_1}\nd_{a_1}B_{ic}\vspace{0.5ex}
\\
\hspace*{3.5ex}
+\nd^{a_1}\nd_{c}B_{ia_1}
-P_{i}{}^eB_{ec}
-4P_{b}{}^kP^{bd}C_{kcdi}
-4P_{b}{}^k\nd^{b}A_{ikc}
-4(\nd^bP_b{}^k)A_{ikc}\vspace{0.5ex}
\\
\hspace*{3.5ex}
-4P_c{}^kP^{bd}C_{bkdi}-4P_c{}^k\nd^{b}A_{ibk}
-4(\nd^bP_c{}^k)A_{ibk}
+2JP^{ad}C_{acdi}
+2(\nd^aJ)A_{iac}\vspace{0.5ex}
\\
\hspace*{3.5ex}
+2J\nd^aA_{iac}
-B^{a}{}_bC^b{}_{cai}
+2A^{ab}{}_cA_{bai})\vspace{0.5ex}
\\
\hspace*{7.0ex}
+Z_{Ec}X^{D}P^{a_1e}B_{ea_1}
\vspace{0.5ex}
\\
%
%
\hspace*{3.5ex}\vspace{0.5ex}=
\\
(X_{E}Z^{Di}-X^DZ_{E}{}^i)(
-2P_{a_1}{}^e\nd^{a_1}A_{cie}
+2P_c{}^e\nd^{a_1}A_{a_1ie}
-2(\nd^{a_1}P_{a_1}{}^e)A_{cie}
\vspace{0.5ex}
\\
\hspace*{3.5ex}
+2(\nd^{a_1}P_c{}^e)A_{a_1ie}
-2P^{a_1e}\nd_{a_1}A_{cie}
+2P^{a_1e}\nd_{c}A_{a_1ie}
+P_i{}^eB_{ec}
-P^{a_1}{}_{a_1}B_{ic}
\vspace{0.5ex}
\\
\hspace*{3.5ex}
+P^{a_1}{}_cB_{ia_1}
+\nd^{a_1}\nd_{a_1}B_{ic}
-\nd^{a_1}\nd_{c}B_{ia_1}
-4P_{b}{}^kP^{be}C_{kcie}
+4P_b{}^k\nd^bA_{ikc}
\vspace{0.5ex}
\\
\hspace*{3.5ex}
+4(\nd^bP_{b}{}^k)A_{ikc}
-4P_c{}^kP^{be}C_{bkie}\vspace{0.5ex}
+4P_{c}{}^k\nd^bA_{ibk}
+4(\nd^bP_{c}{}^k)A_{ibk}
+2JP^{ae}C_{acie}
\\
\hspace*{3.5ex}
-2(\nd^aJ)A_{iac}
-2J\nd^aA_{iac}\vspace{0.5ex}
+2A_{bi}{}^eA_{e}{}^b{}_c
-C^b{}_{ci}{}^eB_{eb}
-\bg_{ic}P^{a_1e}B_{ea_1}
)
%
%
\\
\hspace*{3.5ex}\vspace{0.5ex}=
\\
(X_{E}Z^{Di}-X^DZ_{E}{}^i)(
-8P_{ab}\nd^aA_{(ic)}{}^b
+2P_c{}^e\nd^{a_1}A_{a_1ie}
-4A_{(ic)a}\nd^aJ
\vspace{0.5ex}
\\
\hspace*{3.5ex}
+2(\nd^{a_1}P_c{}^e)A_{a_1ie}
+2P^{a_1e}\nd_{c}A_{a_1ie}
+P_i{}^eB_{ec}
-3JB_{ic}
+5P_c{}^kB_{ik}
+\Delta B_{ic}
\vspace{0.5ex}
\\
\hspace*{3.5ex}
-\nd^{a_1}\nd_{c}B_{ia_1}
+4P_{b}{}^kP^{be}C_{ieck}
+4(\nd^bP_{c}{}^k)A_{ibk}
-2A_{bie}A^{e}{}_c{}^b
+B_{eb}C_i{}^e{}_c{}^b
\vspace{0.5ex}
\\
\hspace*{3.5ex}
-\bg_{ic}P^{a_1e}B_{ea_1}
)
\end{array}
\]
\caption{Sum of the terms in the total symbolic formula for
 $(\CurlyD\nd^{\TBund})_c{}^D{}_E$ in which $X_EZ^D{}^i$, $X^DZ_E{}^i$,
 $X_EZ^D{}_c$, or $X^DZ_E{}_c$ occurs.}\label{XZTermsOne}
\end{figure}
This follows from direct inspection of the total symbolic formula for
$(\CurlyD\nd^{\TBund})_c{}^D{}_E$.  The equation in the figure follows
from the symmetries of $A_{abc}$ and $C_{abcd}$ and from \nn{Bach} and
\nn{TraceNbP}.  In dimension $n=6$, the rightmost member of this
equation is equal to $(X_EZ^{Dd}-X^DZ_E{}^d)16\ObTenSixcd$.  This
follows from a direct computation using \cite{Mathematica} and
\cite{RicciSoftware}, together with \nn{Bach}, \nn{CommuteCovD}, \nn{TraceNbP},
\nn{CottonTensor}, and \nn{WeylTensor}.  The proof of
Theorem~\ref{ObstructionTheorem} is thus complete.
\end{proof}

%
%
\subsection{A comment on the Fefferman-Graham invariant in dimension
  6} \label{FGIRemark}

Fefferman and Graham introduced an interesting natural scalar
conformal invariant $\FGI$ in Proposition~3.4 of \cite{FG2} (see also
\cite{CG-amb}). This invariant has conformal weight $-6$ and so on
closed conformal 6-manifolds,
\begin{equation}\label{FGI-int}
\int_M  \FGI~d\mu
\end{equation}
is a well-defined global conformal invariant. Moreover at leading
order \FGI ~ is quadratic in the jets of the metric. Thus \nn{FGI-int}
is an important action to consider for metric variations. In fact, it
is closely linked to the specialisation to the tractor connection of
our action \nn{SADef} as follows.

Let $A$ denote the tractor connection,
and let $F_A$ denote the tractor curvature as given in
\nn{PlainOmega}.  Then in dimension $n=6$,
\begin{equation}\label{FGICompare}
  \begin{array}{ll}
    \lefteqn{\FGI=}\vspace{0.0ex}
  \\
  &
  2\,\langle F_A,Q_2^AF_A\rangle
  +8C_{abcd}C^a{}_e{}^c{}_iC^{bedi}
  -4C_{abcd}C^a{}_e{}^c{}_iC^{bide}
  +\mbox{{\upshape(}\textit{divergences}\hspace{0.25ex}{\upshape)}}.
  \end{array}
\end{equation}
Here $\langle\cdot,\cdot\rangle$ and $Q_2^A$ are as in
Definition~\ref{BigSADef}.  The equality in \nn{FGICompare} follows
from a short symbolic computation using the computer algebra system
\textit{Mathematica}, \cite{Mathematica}, together with Lee's tensor
calculus software package \textit{Ricci}, \cite{RicciSoftware}.

In even dimensions, elementary representation theory shows that the
Weyl tensor (and its irreducible parts in dimension 4) and the
Fefferman-Graham obstruction tensor are the only conformal invariants
that at leading order are linear in the jets of the metric \cite{Graham-Hirachi}. 
Using this and Theorem \ref{ObstructionTheorem}, it is not difficult
to conclude that {\em with respect to metric variations} both
\nn{SADef} (with $A$ the conformal tractor connection) and
\nn{FGI-int} must yield $\ObTenSixce$ as their functional gradient, up
to conformally invariant lower-order terms.

%
%

\end{document}